\newtheorem{thm}{Theorem}[section]
\newtheorem{lem}[thm]{Lemma}
\theoremstyle{definition}
\theoremstyle{remark}
\numberwithin{equation}{section}
\begin{document}
\title[$(\omega,{\rho})$-Periodic solutions of abstract...]{$(\omega,{\rho})$-Periodic solutions of abstract integro-differential impulsive equations on Banach space}

\author{Michal Fe\v{c}kan}
\address{Department of Mathematical Analysis and Numerical Mathematics,
Faculty of Mathematics, Physics and Informatics, Comenius University,
Slovakia}
\address{Mathematical Institute of Slovak Academy of Sciences, \v{S}tef\'{a}nikova 49, 814 73 Bratislava, Slovakia}
\email{Michal.Feckan@fmph.uniba.sk}

\author{Marko Kosti\' c}
\address{Faculty of Technical Sciences,
University of Novi Sad,
Trg D. Obradovi\' ca 6, 21125 Novi Sad, Serbia}
\email{marco.s@verat.net}

\author{Daniel Velinov}
\address{Department for Mathematics, Faculty of Civil Engineering, Ss. Cyril and Methodius University, Skopje,
Partizanski Odredi
24, P.O. box 560, 1000 Skopje, N. Macedonia}
\email{velinovd@gf.ukim.edu.mk}

{\renewcommand{\thefootnote}{} \footnote{2010 {\it Mathematics
Subject Classification.} 34A37, 34C25, 34C27.
\\ \text{  }  \ \    {\it Key words and phrases.} $(\omega,{\rho})$-periodic solutions, periodic solutions, semilinear integro-differential impulsive equations.
\\  \text{  }  \ \ This research is partially supported by grant 174024 of Ministry
of Science and Technological Development, Republic of Serbia and Bilateral project between MANU and SANU, by the Slovak Research and Development Agency under the contract No. APVV-18-0308, and by the Slovak Grant Agency VEGA No. 1/0358/20 and No. 2/0127/20.}}

\begin{abstract}
In this paper, we investigate the existence and uniqueness of $(\omega,{\rho})$-periodic solutions for a class of the abstract impulsive integro-differential equations on Banach space.
\end{abstract}
\maketitle

\section{Introduction and preliminaries}

The class of $(\omega,c)$-periodic functions was introduced and investigated by E. Alvarez et al. in \cite{alvarez1}-\cite{alvarez2}. This type of periodicity naturally arises in the solution $y(t)$ of Mathieu's equations $y''+ay=2qcos(2t)y$. In \cite{agaoglou}, the authors studied the existence and uniqueness of $(\omega,c)$-periodic solutions for semilinear evolution equations $u'=Au+f(t,u)$ in complex Banach spaces. The notion of $(\omega,c)$-periodicity was generalized in \cite{feckan2}, by M. Fe\v{c}kan, K. Liu and J. Wang, who considered $(\omega,{\mathbb T})$-periodic solutions for this class of semilinear evolution equations, where ${\mathbb T}$ is linear isomorphism on a Banach space $X$. For some other generalizations of this concept see \cite{feckan3}.

On the other side, the impulsive differential equations describe evolution processes characterized by the fact that at certain moments they experience a change of state abruptly, i.e., these processes are subject to short-term perturbations whose duration is negligible compared with the duration of the whole process (see \cite{ahmed}-\cite{ahmed1}, \cite{bain1}-\cite{bain2}, \cite{guo}, \cite{hala}, \cite{hrbain}, \cite{bail},
\cite{li}, \cite{peng}, \cite{wang2}, \cite{wei}). Many biological phenomena involving thresholds, bursting rhythm models in medicine and biology, optimal control models in economics, pharmacokinetics and frequency modulated systems, do exhibit impulsive effects, \cite{bail}. Therefore, the interest for investigating the qualitative features of the solutions of these impulsive systems is quite big. In \cite{li1}, the $(\omega,c)$-periodic solutions of impulsive differential systems with coefficient of matrices were investigated, while the authors of \cite{liu1} used the fixed point theorems in order to clarify certain results concerning the existence and uniqueness of $(\omega,c)$-periodic solutions for nonlinear impulsive differential equations. The authors of \cite{feckan2} established results on the existence and uniqueness of $(\omega,{\mathbb T})$-periodic solutions for impulsive linear and semilinear problems. Furthermore, there are many papers on periodic solutions for periodic system on infinite-dimensional spaces (see \cite{aman}, \cite{liu2}, \cite{sata}, \cite{xiang1}); we also refer to \cite{guo} and \cite{park}, where the integro-differential systems on finite and infinite-dimensional Banach space are investigated. The existence of piecewise continuous mild solutions and optimal control of integro-differential systems is presented in \cite{wei}. In \cite{wang1}-\cite{wang11}, the integro-differential impulsive periodic systems on infinite-dimensional spaces are discussed. To our best knowledge, the existence and uniqueness of $(\omega, c)$-periodic solutions for integro-differential systems ($c\in{\mathbb C},\ c\neq 0)$ have not been extensively studied.

As a continuation of the investigations on $(\omega,{\mathbb T})$-periodic solutions for linear and semilinear problems, and periodic solutions for integro-differential impulsive periodic systems, we consider here $(\omega,\rho)$-periodic solutions of impulsive differential equations as a generalization of the previous concepts (see \cite{agaoglou}, \cite{aman}-\cite{bain1}, \cite{cooke}, \cite{feckan1}-\cite{feckan2}, \cite{nova-mono}-\cite{raff},  \cite{li1}-\cite{liu}, \cite{park}, \cite{sata}-\cite{xiang1}, \cite{wang}-\cite{wang4}). The main aim of this paper is to present results concerning the existence and uniqueness of $(\omega,\rho)$-periodic solutions for certain classes of abstract semilinear integro-differential impulsive equations, considered on a infinite-dimensional pivot Banach space $X$.

The organization of paper can be described briefly as follows. After recalling some preliminary results and definitions from the theory of $(\omega,c)$-periodic functions and strongly continuous semigroups of bounded operators, we present some results on the solutions of the nonhomogeneous linear impulsive equations and certain useful estimates for the further investigations. In the last section, we use the Banach fixed point theorem and the Schauder fixed point theorem to prove the existence and uniqueness of the $(\omega,\rho)$-solutions for semilinear integro-differential equations under our considerations.

\subsection{Preliminaries}  Let $I={\mathbb R}$ or $I=[0,\infty)$. By $(X,\|\cdot \|)$ is denoted a complex Banach space. The abbreviations ${\mathcal C}_{b}(I : X)$ and ${\mathcal C}(K:X),$ where $K$ is a non-empty compact subset of ${\mathbb R},$ stand for the spaces of bounded continuous functions $I \mapsto X$ and continuous functions $K\mapsto X,$ respectively. Both spaces are Banach ones endowed with the sup-norm. The space of $X$-valued piecewise continuous functions on $I$ is given by
\begin{align*}
{\mathcal PC}(I : X)& \equiv \bigl\{y : I\rightarrow X : y\in {\mathcal C}\bigl((t_i,t_{i+1}] : X\bigr),\\& t_i\neq0,\,\,\mbox{for all}\,\, i\in{\mathbb N},\,\, y(t_i^-)=y(t_i)\,\, \mbox{and}\,\, y(t_i^+) \mbox{ exist for any } i\in{\mathbb N}\bigr\},
\end{align*}
where the symbols $y(t_i^-)$ and $y(t_i^+)$ denote the left and the right limits of the function $y(t)$ at the point $t=t_i$,  $i\in{\mathbb N}$, respectively. Let us recall that $
{\mathcal PC}(I : X) $ is a Banach space endowed with the sup-norm.

For an operator family $(T(t))_{t\geq0}$ of a bounded linear operators on a Banach space $X$ it is said that is a strongly continuous semigroup of bounded linear operators (shortly $C_0$ semigroup) if and only if:
\begin{itemize}
\item[(i)] For all $t,\ s\geq 0,$ we have $T(t+s)=T(t)T(s)$;
\item[(ii)] $T(0)=E,$ the identity operator on $X$;
\item[(iii)] For every $x\in X$, we have $\lim\limits_{t\rightarrow0}T(t)x=x$.
\end{itemize}
Let $\rho: X\rightarrow X$. A function $f:[0,\infty)\rightarrow X$ is called $(\omega,\rho)$-periodic function (see \cite{feckan2}) if and only if there is a real number $\omega >0$ such that $f(t+\omega)=\rho f(t)$ for all $t\geq 0$. By ${\Phi}_{\omega,\rho}$ we denote the set of all piecewise continuous and $(\omega,\rho)$-periodic functions, i.e.,
\begin{align*}
{\Phi}_{\omega,\rho}=\bigl\{y\, :\, y\in{\mathcal PC}([0,\infty) : X)\,\, \mbox{and}\,\, y(\cdot+\omega)=\rho y(\cdot)\bigr\}.
\end{align*}

We continue the investigations started in \cite{feckan2} by studying the $(\omega,\rho)$-periodic solutions of the following abstract integro-differential impulsive equation
\begin{equation}\label{rav}
\left\{
\begin{aligned}
&\dot{y}(t)=Ay(t)+f\Biggl(t,y(t),\int\limits_{0}^tg(t,s,y(t))\, ds\Biggr), \quad t\neq{\tau}_k,\,\, k\in{\mathbb N};\\
&\Delta y|_{t={\tau}_k}=B_ky(t)+d_k,\quad \quad k\in{\mathbb N},\end{aligned}\right.
\end{equation}
where $A$ is the infinitesimal generator of a strongly continuous semigroup of bounded linear operators $(T(t))_{t\geq0}$, and $B_k$ is a
bounded linear operator on $X$ for all $k\in{\mathbb N}.$

\indent In this paper, we consider the following assumptions:
\begin{itemize}
\item[(A1)] $A$ is the infinitesimal generator of a strongly continuous semigroup of bounded operators $(T(t))_{t\geq0}$ in $X$. The operators $B_k$, $k\in{\mathbb N}$ are bounded linear operators and $T(t)B_k=B_kT(t)$, for all $k\in{\mathbb N}$, $t\geq0$.
\item[(A2)] The constants $d_k$ and the time sequence ${\tau}_k>0$ are such that $B_{k+m}=B_k$, $d_{k+m}=\rho d_k$, ${\tau}_{k+m}={\tau}_k+\omega$, $k\in{\mathbb N}$, for some fixed $m=i(0,\omega)$, where by $i(0,s)$ is denoted the number of impulsive points between $[0,s]$.
\item[(A3)] $\rho: X\rightarrow X$ is a linear isomorphism and $\rho A=A\rho$, $\rho B_k=B_k\rho$ for all $k\in{\mathbb N}$.
\item[(A4)] The operator $\rho-T(\omega)\prod_{k=1}^{m}(E+B_k)$ is injective.
\item[(A5)] For all $t\geq0$ and $y\in X,$ we have
\begin{align*}
 f\Biggl(t+\omega, \rho y,\rho\int\limits_0^{t+\omega}g(t,s,y)\, ds\Biggl)=\rho f\Biggl(t,y,\int\limits_0^tg(t,s,y)\, ds\Biggr).
\end{align*}
\item[(A6)] For all $t\geq s\geq0$ and $y\in X,$ it holds
\begin{align*}
g(t+\omega,s,\rho y)=\rho g(t,s,y).
\end{align*}
\item[(A7)] Let $f:[0,\infty)\times X\times X\rightarrow X$ and the function $t\mapsto (t,x,y)$ be measurable for all $(x,y)\in X\times X$. For every $\nu>0$, there exists $L_f(\nu)>0$ such that for almost all $t\geq 0$ and all $x_1,\ x_2, \ y_1,\ y_2\in X$ with $\|x_1\|,\ \|x_2\|,\ \|y_1\|,\ \|y_2\|\leq\nu,$ we have
\begin{align*}
\|f(t,x_1,y_1)-f(t,x_2,y_2)\|\leq L_f(\nu)\Biggl(\|x_1-x_2\|+\|y_1-y_2\|\Biggr).
\end{align*}
Set $D:=\{(t,s)\geq 0\times[0,\infty) : 0\leq s\leq t\}$. The function $g: D\times X\rightarrow X$ is continuous, and for each $\nu>0$ there exists $L_g(\nu)>0$ such that for each $(t,s)\in D$ and for each $x, y\in X$ with $\|x\|,\ \|y\|\leq\nu$, we have
\begin{align*}
\|g(t,s,x)-g(t,s,y)\|\leq L_g(\nu)\|x-y\|.
\end{align*}
\item[(A8)] There are constants $\alpha,\ \beta\geq0$ such that
\begin{align*}
\Biggl\|f\Biggl(t,y(t),\int\limits_0^tg(t,s,y(t))\, ds\Biggl)\Biggr\|\leq\alpha+\beta\|y\|,
\end{align*}
for any $t\geq 0$.
\item[(A9)] Let $M\geq1$ and $\gamma\in {\mathbb R}$ be such that $\|T(t)\|\leq Me^{\gamma t}$ for all $t\geq 0$.
\item[(A10)] $X$ is finite-dimensional.
\end{itemize}

\section{Nonhomogeneous linear impulsive problem}\label{homimpulse}

Of concern is the following equation
\begin{equation}\label{prva}
y'(t)=f\Biggl(t,y(t),\int\limits_0^tg(t,s,y(t))\, ds\Biggr)
\end{equation}
accompanied with the conditions
\begin{align}\label{usl1}
f\Biggl(t+\omega,\rho y, \rho\int\limits_{0}^{t+\omega}g(t,s,y)\, ds\Biggr)=\rho f\Biggl(t,y,\int\limits_0^t g(t,s,y)\, ds\Biggr)
\end{align}
and
\begin{align}\label{usl2}
g(t+\omega,s,\rho y)=\rho g(t,s,y).
\end{align}

We need the following auxiliary lemma:

\begin{lem}\label{lem1}
Let $f$ be continuous and locally Lipschitzian in last two coordinates. If
$$
f\Biggl(t+\omega, \rho y(t),\rho\int\limits_0^{t+\omega}g(t,s,y(t))\, ds\Biggr)=\rho f\Biggl(t,y(t),\int\limits_0^tg(t,s,y(t))\, ds\Biggr),\quad t\geq0,
$$
then $y(t)$ is a solution of equation
$$
y^{\prime}(t)=f\Biggl(t,y(t),\int_0^tg(t,s,y(t))\, ds\Biggr)
$$
satisfying $t\in{\Phi}_{\omega,\rho}$ if and only if we have
\begin{align*}
y(\omega)=\rho y(0).
\end{align*}
\end{lem}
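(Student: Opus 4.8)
The natural strategy is to read this as a uniqueness statement for an initial value problem. The forward implication is immediate: if $y\in\Phi_{\omega,\rho}$ then $y(t+\omega)=\rho y(t)$ for all $t\geq0$, and setting $t=0$ yields $y(\omega)=\rho y(0)$. Since the equation \eqref{prva} carries no impulses, any solution is continuous, so the whole content lies in the converse, where from the single relation $y(\omega)=\rho y(0)$ one must recover $y(t+\omega)=\rho y(t)$ for every $t\geq0$.

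For the converse I would set $u(t):=y(t+\omega)$ and $v(t):=\rho y(t)$ and aim to show $u\equiv v$. The hypothesis gives $u(0)=y(\omega)=\rho y(0)=v(0)$, so it suffices to exhibit one common differential equation solved by both $u$ and $v$ and then invoke uniqueness. The candidate is $w'(t)=F(t,w(t))$ with
\[
F(t,x):=f\left(t+\omega,\,x,\,\int_0^{t+\omega}g(t+\omega,s,x)\,ds\right).
\]
That $u$ solves this equation is a direct consequence of $y$ satisfying \eqref{prva}: differentiating $u(t)=y(t+\omega)$ and writing the equation at time $t+\omega$ gives $u'(t)=F(t,u(t))$.

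The substantive computation is that $v$ solves the same equation. Here I would differentiate to get $v'(t)=\rho y'(t)=\rho f\bigl(t,y(t),\int_0^t g(t,s,y(t))\,ds\bigr)$ and then apply the compatibility hypothesis of the lemma (that is, \eqref{usl1}) to move $\rho$ through $f$, obtaining $v'(t)=f\bigl(t+\omega,\rho y(t),\rho\int_0^{t+\omega}g(t,s,y(t))\,ds\bigr)$. It then remains to identify the inner argument with the integral occurring in $F(t,v(t))$: using \eqref{usl2} in the form $g(t+\omega,s,\rho y(t))=\rho g(t,s,y(t))$ together with the linearity and continuity of $\rho$ (so that $\rho$ passes through the integral) gives $\int_0^{t+\omega}g(t+\omega,s,v(t))\,ds=\rho\int_0^{t+\omega}g(t,s,y(t))\,ds$, whence $v'(t)=F(t,v(t))$.

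With $u$ and $v$ both solving $w'=F(t,w)$ and agreeing at $t=0$, uniqueness forces $u\equiv v$, that is $y(t+\omega)=\rho y(t)$ for all $t\geq0$; combined with continuity this places $y$ in $\Phi_{\omega,\rho}$. I expect the main obstacle to be twofold. First is the bookkeeping in the integral term, where the time--shift $t\mapsto t+\omega$ in the first slot of $g$ must be reconciled with the insertion of $\rho$ coming from \eqref{usl2} so that the integrand matches $F(t,v(t))$ exactly. Second is the justification of the uniqueness step: one must check that $F(t,\cdot)$ is locally Lipschitz, which follows since $f$ is locally Lipschitz in its last two coordinates and $x\mapsto\int_0^{t+\omega}g(t+\omega,s,x)\,ds$ inherits a local Lipschitz constant from that of $g$ (bounded on compact $t$--intervals), so that the standard uniqueness theorem for ordinary differential equations with locally Lipschitz right--hand side applies.
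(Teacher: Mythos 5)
Your proposal is correct and takes essentially the same route as the paper: the paper defines $x(t):=\rho^{-1}y(t+\omega)$, uses \eqref{usl1}--\eqref{usl2} to show that $x$ solves the same Cauchy problem as $y$ with $x(0)=\rho^{-1}y(\omega)=y(0)$, and concludes $x\equiv y$ by uniqueness, which is exactly your comparison of $u=y(\cdot+\omega)$ with $v=\rho y$ after composing with $\rho^{-1}$. The only differences are cosmetic ones in your favor: you run the uniqueness argument for the time-shifted equation $w'=F(t,w)$ rather than pulling back to \eqref{prva}, and you make explicit the local Lipschitz verification for $F$ (and the passage of $\rho$ through the integral) that the paper leaves implicit.
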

\begin{proof}
Let $y\in{\Phi}_{\omega,\rho}$. By the definition of ${\Phi}_{\omega,\rho}$, we have $y(t+\omega)=\rho y(t)$ for all $t\geq 0$. Put $t=0$, so $y(\omega)=\rho y(0)$.
Now, let $y(\omega)=\rho y(0).$ Set
$x(t):={\rho}^{-1}y(t+\omega)$, $t\geq0$.
Using (\ref{usl1})-(\ref{usl2}), we have
\begin{align*}
x'(t)&={\rho}^{-1}y'(t+\omega)\\
&={\rho}^{-1}f\Biggl(t+\omega,y(t+\omega),\int\limits_0^{t+\omega}g(t+\omega,s,y(t+\omega))\, ds\Biggr)\\
&={\rho}^{-1}f\Biggl(t+\omega,\rho{\rho}^{-1} y(t+\omega),\int\limits_0^{t+\omega}g(t+\omega,s,\rho{\rho}^{-1} y(t+\omega))\, ds\Biggr)\\
&={\rho}^{-1}f\Biggl(t+\omega,\rho x(t),\int\limits_0^{t+\omega}g(t,s,\rho x(t))\, ds\Biggr)\end{align*}
\begin{align*}
&={\rho}^{-1}\rho f\Biggl(t+\omega,x(t),\int\limits_0^tg(t,s,y(t))\, ds\Biggr)\\
&=f\Biggl(t+\omega,x(t),\int\limits_0^tg(t,s,y(t))\, ds\Biggr).\end{align*}
Additionally,
\begin{align*}
x(0)={\rho}^{-1}y(\omega)=y(0).
\end{align*}
Now, both $y(t)$ and $x(t)$, $t\geq 0$, satisfy (\ref{prva}) and $y(0)=x(0)$. From the uniqueness of the solutions, we conclude that $x(t)=y(t)$, so $y(t+\omega)=\rho y(t)$, $t\geq 0$.
\end{proof}

At the very beginning of our work, we consider the homogeneous linear impulsive evolution equation.

\begin{lem}\label{lem2} Let \emph{(A1)--(A3)} hold. Then the homogeneous linear impulsive evolution equation
\begin{equation}\label{eq1}
\left\{
\begin{aligned}
&\dot{y}(t)=Ay(t), \quad t\neq{\tau}_k,\,\, k\in{\mathbb N}\\
&\Delta y|_{t={\tau}_k}=B_ky(t)+d_k,\quad\quad k\in{\mathbb N},\end{aligned}\right.
\end{equation}
has a solution $y\in{\Phi}_{\omega,\rho}$ if and only if $y(\omega)=\rho y(0)$ or
\begin{align*}
\Biggl(\rho-T(\omega)\Biggl(\prod\limits_{k=1}^{i(0,\omega)}(E+B_k){\Biggr)}\Biggr)y(0)=\sum\limits_{0<{\tau}_i<\omega}T(\omega-{\tau}_i)\prod\limits_{k=1}^{i({\tau}_i,\omega)}(E+B_k)d_i.
\end{align*}
\end{lem}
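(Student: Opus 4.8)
The plan is to prove the two equivalences separately: first reduce membership in $\Phi_{\omega,\rho}$ to the endpoint condition $y(\omega)=\rho y(0)$, and then convert that condition into the displayed operator identity by inserting the explicit solution of \eqref{eq1}.

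For the first equivalence I would argue exactly as in Lemma~\ref{lem1}, now in the impulsive setting. The implication $y\in\Phi_{\omega,\rho}\Rightarrow y(\omega)=\rho y(0)$ is immediate on putting $t=0$ in $y(t+\omega)=\rho y(t)$. For the converse, assuming $y(\omega)=\rho y(0)$, I set $x(t):=\rho^{-1}y(t+\omega)$ and verify that $x$ solves \eqref{eq1} as well. Between impulses $\dot x(t)=\rho^{-1}Ay(t+\omega)=A\rho^{-1}y(t+\omega)=Ax(t)$ by (A3). By (A2) the impulse instants of $t\mapsto y(t+\omega)$ are the points $\tau_{k+m}-\omega=\tau_k$, so $x$ has the same impulse instants as $y$; at each $\tau_k$ the jump of $x$ equals $\rho^{-1}\bigl(B_{k+m}y(\tau_{k+m})+d_{k+m}\bigr)$, which by $B_{k+m}=B_k$, $d_{k+m}=\rho d_k$ and $\rho B_k=B_k\rho$ collapses to $B_kx(\tau_k)+d_k$. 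Since also $x(0)=\rho^{-1}y(\omega)=y(0)$, uniqueness of solutions of the linear problem forces $x\equiv y$, i.e. $y(t+\omega)=\rho y(t)$.

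For the second equivalence I would solve \eqref{eq1} explicitly on $[0,\omega]$. On each interval $(\tau_n,\tau_{n+1}]$ one has $\dot y=Ay$, hence $y(t)=T(t-\tau_n)y(\tau_n^{+})$, while the jump condition together with left-continuity $y(\tau_k)=y(\tau_k^{-})$ gives $y(\tau_k^{+})=(E+B_k)y(\tau_k^{-})+d_k$. Telescoping these two relations and using the semigroup law together with the commutation $T(t)B_k=B_kT(t)$ from (A1), an induction on $n$ (with base case $y(t)=T(t)y(0)$ on $(0,\tau_1]$) should yield, for $\tau_n<t\leq\tau_{n+1}$,
\begin{align*}
y(t)=\Biggl(\prod_{k=1}^{n}(E+B_k)\Biggr)T(t)y(0)+\sum_{i=1}^{n}\Biggl(\prod_{k=i+1}^{n}(E+B_k)\Biggr)T(t-\tau_i)d_i.
\end{align*}
Evaluating at $t=\omega$ with $n=m=i(0,\omega)$, and commuting $T(\omega)$ through $\prod_{k=1}^{m}(E+B_k)$, the condition $y(\omega)=\rho y(0)$ rearranges to
\begin{align*}
\Biggl(\rho-T(\omega)\prod_{k=1}^{m}(E+B_k)\Biggr)y(0)=\sum_{0<\tau_i<\omega}T(\omega-\tau_i)\Biggl(\prod_{k=i+1}^{m}(E+B_k)\Biggr)d_i,
\end{align*}
which is the asserted identity once the inner product is read as ranging over the $i(\tau_i,\omega)$ impulse points lying in $(\tau_i,\omega)$.

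The step I expect to be most delicate is this last one. Since the $B_k$ need not commute among themselves, I must keep the factors $(E+B_k)$ in their correct order throughout the telescoping, track the one-sided limits carefully at every $\tau_k$, and reconcile the product range $\prod_{k=i+1}^{m}$ produced by the induction with the notation $\prod_{k=1}^{i(\tau_i,\omega)}$ appearing in the statement. Everything else follows directly from the semigroup property, the commutation hypotheses (A1) and (A3), and uniqueness for the linear problem.
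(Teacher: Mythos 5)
Your proposal is correct, but it reaches the conclusion by a genuinely different route than the paper. For the converse implication ($y(\omega)=\rho y(0)\Rightarrow y\in\Phi_{\omega,\rho}$), the paper quotes the representation formula \cite[(2.21)]{samoilenko} and verifies periodicity \emph{directly}: it expands $y(t+\omega)$, splits $T(t+\omega)$ and the impulse products and sums at $\omega$ via the semigroup law and (A2) ($\tau_{i+m}=\tau_i+\omega$, $d_{i+m}=\rho d_i$, $B_{k+m}=B_k$), recognizes the bracketed block as $y(\omega)=\rho y(0)$, and re-indexes the tail sum to land on $\rho y(t)$. You instead transport the problem by $\rho^{-1}$: setting $x(t)=\rho^{-1}y(t+\omega)$, checking via (A2)--(A3) that $x$ solves the same impulsive problem with $x(0)=y(0)$, and invoking uniqueness --- exactly the mechanism of the paper's Lemma \ref{lem1}, adapted to the impulsive setting. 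Your route buys a shorter argument that avoids the sum re-indexing entirely, at the cost of verifying that the shifted function has the same impulse instants and jump data (which you do, correctly using $\rho^{-1}B_k=B_k\rho^{-1}$). Moreover, you derive the representation formula by induction rather than citing it, and you make explicit the equivalence between $y(\omega)=\rho y(0)$ and the displayed operator identity --- a step the paper's proof leaves entirely implicit (it follows from evaluating the cited formula at $t=\omega$, but this is never written out). Your caution about the non-commuting factors is also warranted: the paper's products $\prod_{k=1}^{i(\tau_i,\omega)}(E+B_k)$ are written in a formally ambiguous way, and the identity holds with the product taken, in the correct order, over the impulse indices $i+1,\dots,m$, exactly as your induction produces.
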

\begin{proof}
The direct implication is obvious. We prove the opposite direction. Let us assume that $y(\omega)=\rho y(0)$. The solution of (\ref{eq1}), with $y(0)$, for any $t\neq{\tau}_k$, $k\in{\mathbb N}$ is given by the formula (\cite{samoilenko}, (2.21)):
\begin{align*}
y(t)&=T(t)\prod\limits_{k=1}^{i(0,t)}(E+B_k)y(0)+\int\limits_0^t T(t-{\tau})\prod\limits_{k=1}^{i(\tau,t)}(E+B_k)f(\tau)\, d{\tau}\\
&+\sum\limits_{0<{\tau}_i<t}T(t-{\tau}_i)\prod\limits_{k=1}^{i({\tau}_i,t)}(E+B_k) d_i,\quad t\geq0.
\end{align*}
For every $t\geq0$ and $t\neq{\tau}_k$ for any $k\in{\mathbb N},$ we have:
\begin{align*}
y(t& +\omega)\\
& = T(t+\omega)\prod\limits_{k=1}^{i(0,t+\omega)}(E+B_k)y(0)+\sum\limits_{0<{\tau}_i<t+\omega}T(t+\omega-{\tau}_i)\prod\limits_{k=1}^{i({\tau}_i,t+\omega)}(E+B_k) d_i\\
&=T(t)T(\omega)\prod\limits_{k=1}^{i(0,t)}(E+B_k)\cdot\prod\limits_{k=1}^{i(0,\omega)}(E+B_k)y(0)\\
&+\sum\limits_{0<{\tau}_i<t+\omega}T(t)T(\omega-{\tau}_i)\prod\limits_{k=1}^{i(\omega,t+\omega)}(E+B_k)\cdot\prod\limits_{k=1}^{i({\tau}_i,t+\omega)}(E+B_k) d_i\end{align*}
\begin{align*}
&=T(t)\prod\limits_{k=1}^{i(0,t)}(E+B_k)\\
&\times{\Biggl(}T(\omega)\prod\limits_{k=1}^{i(0,\omega)}(E+B_k)y(0)+\sum\limits_{0<{\tau}_i<\omega}T(\omega-{\tau}_i)
\prod\limits_{k=1}^{i({\tau}_i,\omega)}(E+B_k) d_i{\Biggr)}\\
&+\sum\limits_{\omega\leq{\tau}_i<t+\omega}T(t+\omega-{\tau}_i)\prod\limits_{k=1}^{i({\tau}_i,t+\omega)} d_i\\
&=T(t)\prod\limits_{k=1}^{i(0,t)}y(\omega)+\sum\limits_{\omega\leq{\tau}_i<t+\omega}T(t+\omega-{\tau}_{i+m})\prod\limits_{k=1}^{i({\tau}_{i+m},t+\omega)} (E+B_k) d_{i+m}\\
&=T(t)\prod\limits_{k=1}^{i(0,t)}(E+B_k)\rho y(0)+\sum\limits_{0<{\tau}_i<t}T(t-{\tau}_i)\prod\limits_{k=1}^{i({\tau}_i,t)}\rho d_i=\rho y(t).
\end{align*}
\end{proof}
Next, we consider the $(\omega,\rho)$-periodic solutions of the following problem:

\begin{equation}\label{nonhom}
\left\{
\begin{aligned}
&\dot{y}(t)=Ay(t)+f(t), \quad t\neq{\tau}_k,\,\, k\in{\mathbb N}\\
&\Delta y|_{t={\tau}_k}=B_ky(t)+d_k,\quad\quad k\in{\mathbb N},\end{aligned}\right.
\end{equation}
where $f\in{\mathcal C}([0,\infty):X)$ and $f$ is an $(\omega,\rho)$-periodic function.

\begin{lem}\label{lem3} Let \emph{(A1)--(A4)} hold. Then the $(\omega,\rho)$-periodic solution $y\in \Psi={\mathcal PC}([0,\omega]:X)$ of (\ref{nonhom}) is given by
\begin{align*}
y(t)=\int\limits_0^{\omega}H(t,\tau) f(\tau)\, d\tau+\sum\limits_{i=1}^mH(t,{\tau}_i)d_i,
\end{align*}
where the function $H(\cdot,\cdot)$ is given by
\begin{equation*}
H(t,\tau)=
\left\{
\begin{aligned}
&{\Biggl(}T(t)\prod\limits_{k=1}^{i(0,t)}(E+B_k)\Biggl(\rho-T(\omega)\prod\limits_{k=1}^{i(0,\omega)}(E+B_k)\Biggr)^{-1}T(\omega-t)\prod\limits_{k=1}^{i(t,\omega)}(E+B_k)+E{\Biggr)}\\
&\times T(t-\tau)\prod\limits_{k=1}^{i(0,t)}(E+B_k),\quad 0<\tau<t;\\
&T(t)\prod\limits_{k=1}^{i(0,t)}(E+B_k)\Biggl(\rho-T(\omega)\prod\limits_{k=1}^{i(0,\omega)}(E+B_k)\Biggr)^{-1}T(\omega-\tau)\prod\limits_{k=1}^{i(t,\omega)}(E+B_k),\\
&t\leq\tau<\omega.
\end{aligned}\right.
\end{equation*}
\end{lem}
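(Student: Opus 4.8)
The plan is to reduce the problem to the determination of a single initial value $y(0)$ and then read off the kernel $H$ by substitution. First I would write down the explicit variation-of-constants representation of an arbitrary solution of (\ref{nonhom}) issuing from $y(0)$, namely the same formula from \cite{samoilenko} used in the proof of Lemma \ref{lem2}, but now keeping the forcing term $f$:
\begin{align*}
y(t)=T(t)\prod_{k=1}^{i(0,t)}(E+B_k)y(0)&+\int_0^t T(t-\tau)\prod_{k=1}^{i(\tau,t)}(E+B_k)f(\tau)\,d\tau\\
&+\sum_{0<\tau_i<t}T(t-\tau_i)\prod_{k=1}^{i(\tau_i,t)}(E+B_k)d_i,\qquad t\geq0.
\end{align*}
Since $f$ satisfies (\ref{usl1})--(\ref{usl2}) and the impulse data obey (A2), the computation in the proof of Lemma \ref{lem2} applies verbatim, so such a $y$ belongs to $\Phi_{\omega,\rho}$ if and only if $y(\omega)=\rho y(0)$. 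Thus the whole problem collapses to solving this single operator equation for $y(0)$.

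Next I would evaluate the displayed formula at $t=\omega$, where $i(0,\omega)=m$, and impose $y(\omega)=\rho y(0)$. Transposing the $y(0)$-terms gives
\begin{align*}
\Biggl(\rho-T(\omega)\prod_{k=1}^{m}(E+B_k)\Biggr)y(0)&=\int_0^{\omega}T(\omega-\tau)\prod_{k=1}^{i(\tau,\omega)}(E+B_k)f(\tau)\,d\tau\\
&\quad+\sum_{i=1}^{m}T(\omega-\tau_i)\prod_{k=1}^{i(\tau_i,\omega)}(E+B_k)d_i.
\end{align*}
By (A4) the operator $\rho-T(\omega)\prod_{k=1}^m(E+B_k)$ is injective, so $y(0)$ is uniquely determined; writing its inverse (which is precisely what the kernel $H$ presupposes) I would solve $y(0)=(\rho-T(\omega)\prod_{k=1}^m(E+B_k))^{-1}(\cdots)$ for the right-hand side above.

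Finally I would substitute this $y(0)$ back into the representation of $y(t)$ and collect terms. Setting $R:=(\rho-T(\omega)\prod_{k=1}^m(E+B_k))^{-1}$, the part acting on $f$ is $\int_0^\omega T(t)\prod_{k=1}^{i(0,t)}(E+B_k)\,R\,T(\omega-\tau)\prod_{k=1}^{i(\tau,\omega)}(E+B_k)f(\tau)\,d\tau$ added to $\int_0^t T(t-\tau)\prod_{k=1}^{i(\tau,t)}(E+B_k)f(\tau)\,d\tau$; splitting the first integral at $\tau=t$ and merging the two pieces over $[0,t]$ produces exactly the two branches $0<\tau<t$ and $t\leq\tau<\omega$ of $H(t,\tau)$, while the discrete sum, rewritten through (A2), assembles into $\sum_{i=1}^m H(t,\tau_i)d_i$. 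To bring the branch $0<\tau<t$ into the stated factored shape I would use the semigroup law $T(\omega-\tau)=T(\omega-t)T(t-\tau)$ together with the commutation $T(s)B_k=B_kT(s)$ from (A1) to pull $T(t-\tau)\prod(E+B_k)$ out to the right and isolate the bracket ending in $+E$.

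The main obstacle I expect is the bookkeeping of the finite products $\prod_k(E+B_k)$ over the correct sets of impulse indices — in particular tracking which impulses in $(\tau,t)$, $(t,\omega)$ and $(\tau,\omega)$ enter each factor once the $[0,\omega]$-integral is split at $t$, and confirming that the regrouped expression really coincides with the kernel as written (this is where the index conventions must be handled with care). A secondary point I would verify is that the resulting $y$ genuinely lies in $\Psi=\mathcal{PC}([0,\omega]:X)$ and actually solves (\ref{nonhom}), satisfying both the differential equation off the points $\tau_k$ and the jump relations, rather than merely fulfilling the endpoint identity $y(\omega)=\rho y(0)$ at a formal level.
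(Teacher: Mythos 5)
Your proposal is correct and follows essentially the same route as the paper's own proof: the variation-of-constants formula \cite[(2.21)]{samoilenko}, reduction of $(\omega,\rho)$-periodicity to the endpoint condition $y(\omega)=\rho y(0)$ (the paper invokes Lemma \ref{lem1} here, you reuse the computation of Lemma \ref{lem2}, which is the same substance), inversion of $\rho-T(\omega)\prod_{k=1}^{m}(E+B_k)$ via (A4) to determine $y(0)$, and back-substitution with the integral and sum split at $\tau=t$ to assemble the kernel $H$. The index bookkeeping for the products $\prod_{k}(E+B_k)$ that you flag as the main obstacle is exactly where the paper's displayed manipulations are carried out, so nothing further is missing.
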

\begin{proof}
Using the formula \cite[(2.21)]{samoilenko} and Lemma \ref{lem1}, we obtain
\begin{align*}
y(\omega)&=T(\omega)\prod\limits_{k=1}^{i(0,\omega)}(E+B_k)y_0+\int\limits_0^{\omega}T(\omega-\tau)\prod\limits_{k=1}^{i(\tau,\omega)}(E+B_k)f(\tau)\, d\tau\\
&+\sum\limits_{i=1}^mT(\omega-{\tau}_i)\prod\limits_{k=1}^{i({\tau}_i,\omega)}(E+B_k) d_i=\rho y_0.
\end{align*}
Hence,
\begin{align*}
y_0&=\Biggl(\rho-T(\omega)\prod\limits_{k=1}^{i(0,\omega)}(E+B_k)\Biggr)^{-1}+\Biggl(\int\limits_0^{\omega}T(\omega-\tau)\prod\limits_{k=1}^{i(\tau,\omega)}(E+B_k)f(\tau) \, d\tau\\
&+\sum\limits_{i=1}^mT(\omega-{\tau}_i)\prod\limits_{k=1}^{i({\tau}_i,\omega)}(E+B_k) d_i\Biggr),
\end{align*}
so the solution of (\ref{nonhom}) can be written as\nopagebreak {\small
\begin{align*}
y(t)&=T(t)\prod\limits_{k=1}^{i(0,t)}(E+B_k)\Biggl(\rho-T(\omega)\prod\limits_{k=1}^{i(0,\omega)}(E+B_k)\Biggr)^{-1}\Biggl(\int\limits_{0}^{\omega}T(\omega-\tau) \prod\limits_{k=1}^{i(\tau,\omega)}(E+B_k)f(\tau)\, d\tau\\
&+\sum\limits_{i=1}^mT(\omega-{\tau}_i)\prod\limits_{k=1}^{i({\tau}_i,\omega)}(E+B_k) d_i\Biggr)+\int\limits_0^tT(t-\tau)\prod\limits_{k=1}^{i(\tau,t)}(E+B_k)f(\tau)\, d\tau\\
&+\sum\limits_{0<{\tau}_i<t}T(t-{\tau}_i)\prod\limits_{k=1}^{i({\tau}_i,t)}(E+B_k) d_i\\
&=\int\limits_0^{\omega}T(t)\prod\limits_{k=1}^{i(0.t)}(E+B_k)\Biggl(\rho-T(\omega)\prod\limits_{k=1}^{i(0,\omega)}(E+B_k)\Biggr)^{-1}T(\omega-\tau) \prod\limits_{k=1}^{i({\tau},\omega)}(E+B_k)f(\tau)\, d\tau\\
&+\sum\limits_{i=1}^mT(t)\prod\limits_{k=1}^{i(0,t)}\Biggl(\rho-T(\omega)\prod\limits_{k=1}^{i(0,\omega)}(E+B_k)\Biggr)^{-1}T(\omega-{\tau}_i) \prod\limits_{k=1}^{i({\tau}_i,\omega)}(E+B_k) d_i\\
&+\int\limits_0^tT(t-\tau)\prod\limits_{k=1}^{i(\tau,t)}(E+B_k)f(\tau)\, d\tau+\sum\limits_{0<{\tau}_i<t}T(t-{\tau}_i)\prod\limits_{k=1}^{i({\tau}_i,t)}(E+B_k) d_i\\
&=\int\limits_{0}^tT(t)\prod\limits_{k=1}^{i(0,t)}(E+B_k)\Biggl(\rho-T(\omega)\prod\limits_{k=1}^{i(0,\omega)}(E+B_k)\Biggr)^{-1}T(\omega-\tau) \prod\limits_{k=1}^{i(\tau,\omega)}(E+B_k)f(\tau)\, d\tau\\
&+\int\limits_t^{\omega}T(t)\prod\limits_{k=1}^{i(0,t)}(E+B_k)\Biggl(\rho-T(\omega)\prod\limits_{k=1}^{i(0,\omega)}(E+B_k)\Biggr)^{-1}T(\omega-\tau) \prod\limits_{k=1}^{i(\tau-\omega)}(E+B_k)f(\tau)\, d\tau\\
&+\sum\limits_{0<{\tau}_i<t}T(t)\prod\limits_{k=1}^{i(0,t)}(E+B_k)\Biggl(\rho-T(\omega)\prod\limits_{k=1}^{i(0,\omega)}(E+B_k)\Biggr)^{-1}T(\omega-{\tau}_i) \prod\limits_{k=1}^{i(\tau,\omega)}(E+B_k) d_i\\
&+\sum\limits_{t\leq{\tau}_i\leq\omega}T(t)\prod\limits_{k=1}^{i(0,t)}(E+B_k)\Biggl(\rho-T(\omega)\prod\limits_{k=1}^{i(0,\omega)}(E+B_k)\Biggr)^{-1}T(\omega-{\tau}_i) \prod\limits_{k=1}^{i(\tau,\omega)}(E+B_k) d_i\\
&+\int\limits_{0}^{t}T(t-\tau)\prod\limits_{k=1}^{i(\tau,t)}(E+B_k) f(\tau)\, d\tau
+\sum\limits_{0<{\tau}_i<t}T(t-{\tau}_i)\prod\limits_{k=1}^{i({\tau}_i,t)}(E+B_k) d_i\\
&=\int\limits_0^t\Biggl(T(t)\prod\limits_{k=1}^{i(0,t)}(E+B_k)\Biggl(\rho-T(\omega)\prod\limits_{k=1}^{i(0,\omega)}(E+B_k)\Biggr)^{-1}T(\omega-\tau) \prod\limits_{k=1}^{i(t,\omega)}(E+B_k)+E\Biggr)\\
\end{align*}
\begin{align*}
& \times T(t-\tau)\prod\limits_{k=1}^{i(\tau,t)}(E+B_k) f(\tau)\, d\tau\\
&+\int\limits_t^{\omega}T(t)\prod\limits_{k=1}^{i(0,t)}(E+B_k)\Biggl(\rho-T(\omega)\prod\limits_{k=1}^{i(0,\omega)}(E+B_k)\Biggr)^{-1}T(\omega-\tau) \prod\limits_{k=1}^{i(\tau,\omega)}(E+B_k)f(\tau)\, d\tau\\
&+\sum\limits_{0<{\tau}_i<t}\Biggl(T(t)\prod\limits_{k=1}^{i(0,t)}(E+B_k)\Biggl(\rho-T(\omega)\prod\limits_{k=1}^{i(0,\omega)}(E+B_k)\Biggr)^{-1}T(\omega-t) \prod\limits_{k=1}^{i(t,\omega)}(E+B_k)+E\Biggr)\\
&\times T(t-{\tau}_i)\prod\limits_{k=1}^{i({\tau}_i,t)}(E+B_k) d_i\\
&+\sum\limits_{t\leq{\tau}_i<\omega} T(t)\prod\limits_{k=1}^{i(0,t)}(E+B_k)\Biggl(\rho-T(\omega)\prod\limits_{k=1}^{i(0,\omega)}(E+B_k)\Biggr)^{-1} T(\omega-{\tau}_i)\prod\limits_{k=1}^{i({\tau}_i,\omega)} d_i\\
&=\int\limits_0^{\omega}H(t,\tau) f(\tau)\, d\tau+\sum\limits_{i=1}^m H(t,{\tau}_i) d_i.
\end{align*}}
\end{proof}
As a consequence of the previous result, we can state the following:

\begin{lem} Let \emph{(A1)--(A4)} hold, and let for each $t\geq 0$ we have
$$
\Biggl(\rho-T(\omega)\prod\limits_{k=1}^{i(0,\omega)}(E+B_k)\Biggr)^{-1}T(t)=T(t)\Biggl(\rho-T(\omega)\prod\limits_{k=1}^{i(0,\omega)}(E+B_k)\Biggr)^{-1}.
$$
Then the unique $(\omega,\rho)$-periodic solution $y\in{\Psi}$  of (\eqref{nonhom}) is given by
\begin{align*}
y(t)=\int\limits_{0}^{\omega}H(t,\tau) f(\tau)\, d\tau+\sum\limits_{i=1}^m H(t,{\tau}_i) d_i,
\end{align*}
where $H(\cdot,\cdot)$ is defined by{\small
\begin{equation*}
H(t,\tau):=
\left\{
\begin{aligned}
&\rho\Biggl(\rho-T(\omega)\prod\limits_{k=1}^{i(0,\omega)}(E+B_k)\Biggr)^{-1}T(t-\tau)\prod\limits_{k=1}^{i(\tau,t)}(E+B_k),\quad 0<\tau<t;\\
&T(t+\omega-\tau)\prod\limits_{k=1}^{i(0,t)+i(\tau,\omega)}(E+B_k)\Biggl(\rho-T(\omega)\prod\limits_{k=1}^{i(0,\omega)}(E+B_k)\Biggr)^{-1},\quad t\leq\tau<\omega.\end{aligned}\right.
\end{equation*}}
\end{lem}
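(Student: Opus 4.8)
The plan is to start from the representation already obtained in Lemma \ref{lem3} and to show that, under the additional commutativity hypothesis, the kernel $H(t,\tau)$ computed there collapses to the simpler expression asserted here; uniqueness is not reproved from scratch but inherited as in Lemma \ref{lem3}, resting on the injectivity assumption (A4). Throughout I would abbreviate $P:=\prod_{k=1}^{i(0,\omega)}(E+B_k)$ and $R:=\rho-T(\omega)P$, so that the hypothesis reads $R^{-1}T(t)=T(t)R^{-1}$ for all $t\geq0$, and the first branch of the target kernel is $\rho R^{-1}T(t-\tau)\prod_{k=1}^{i(\tau,t)}(E+B_k)$.

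First I would record the commutation relations that drive the whole computation. By (A1) each $T(t)$ commutes with every $B_k$, hence with every finite product $\prod(E+B_k)$, and in particular with $P$; by (A3) the isomorphism $\rho$ commutes with $A$ and therefore with the whole semigroup $(T(t))_{t\geq0}$, as well as with each $B_k$ and with $P$. Consequently $R=\rho-T(\omega)P$ already commutes with every $T(t)$, so that the extra hypothesis $R^{-1}T(t)=T(t)R^{-1}$ is in fact automatic once (A1) and (A3) are in force; what the computation actually uses is only that $\rho$, the operators $T(s)$, the impulse products, and $R^{-1}$ all commute pairwise (in particular $\rho R^{-1}=R^{-1}\rho$). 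I would then isolate the single genuinely algebraic identity needed: from $R=\rho-T(\omega)P$ one gets $T(\omega)P=\rho-R$, whence
\[
R^{-1}T(\omega)P=R^{-1}\rho-E=\rho R^{-1}-E.
\]

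Next I would treat the two branches of $H$ from Lemma \ref{lem3} separately. For $0<\tau<t$, I would factor out on the right the block $T(t-\tau)\prod_{k=1}^{i(\tau,t)}(E+B_k)$ (the factor that multiplies $f(\tau)$ in the proof of Lemma \ref{lem3}), leaving the bracket $\bigl(T(t)\prod_{k=1}^{i(0,t)}(E+B_k)R^{-1}T(\omega-t)\prod_{k=1}^{i(t,\omega)}(E+B_k)+E\bigr)$; moving $R^{-1}$ to the far left and merging the factors via $T(t)T(\omega-t)=T(\omega)$ and $\prod_{k=1}^{i(0,t)}(E+B_k)\prod_{k=1}^{i(t,\omega)}(E+B_k)=P$, the bracket becomes $R^{-1}T(\omega)P+E$, which by the displayed identity equals $\rho R^{-1}$. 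This produces the first line of the asserted $H$. For $t\leq\tau<\omega$, I would start from $T(t)\prod_{k=1}^{i(0,t)}(E+B_k)R^{-1}T(\omega-\tau)\prod_{k=1}^{i(\tau,\omega)}(E+B_k)$, push $R^{-1}$ to the right, merge $T(t)T(\omega-\tau)=T(t+\omega-\tau)$ and combine the two impulse products into $\prod_{k=1}^{i(0,t)+i(\tau,\omega)}(E+B_k)$, arriving exactly at the second line.

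The step I expect to require the most care is the bookkeeping of the ordered operator products $\prod(E+B_k)$ when the $B_k$ need not commute among themselves: the factorization $P=\prod_{k=1}^{i(0,t)}(E+B_k)\cdot\prod_{k=1}^{i(t,\omega)}(E+B_k)$ and the merger $\prod_{k=1}^{i(0,t)}(E+B_k)\prod_{k=1}^{i(\tau,\omega)}(E+B_k)=\prod_{k=1}^{i(0,t)+i(\tau,\omega)}(E+B_k)$ must be read as concatenations of the impulse factors taken in increasing order of the points $\tau_k$, and the right to slide each $T(s)$ across these factors is precisely what (A1) supplies. Once that indexing convention is fixed, everything else reduces to the pairwise commutations listed above together with the single identity $R^{-1}T(\omega)P=\rho R^{-1}-E$, and the solution formula itself is carried over verbatim from Lemma \ref{lem3}.
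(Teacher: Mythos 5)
Your proposal is correct and follows essentially the same route as the paper: the heart of both arguments is the commutation relations supplied by (A1), (A3) and the stated hypothesis, together with the single identity $R^{-1}T(\omega)P+E=R^{-1}\rho=\rho R^{-1}$ (where $P=\prod_{k=1}^{m}(E+B_k)$ and $R=\rho-T(\omega)P$), which collapses the bracket in the branch $0<\tau<t$ and, after sliding $R^{-1}$ to the right, yields the branch $t\leq\tau<\omega$. The only real difference is organizational: the paper re-runs the variation-of-constants computation of Lemma \ref{lem3} (solving $y(\omega)=\rho y_{0}$ for $y_{0}$, substituting, and splitting the integral at $t$) and applies the commutations only in the final display, whereas you take the kernel of Lemma \ref{lem3} as given and simplify it directly; note that both versions also rely on the same tacit convention you flag, namely that the ordered impulse products $\prod(E+B_k)$ may be slid past $R^{-1}$, which (A1)--(A4) alone do not literally guarantee when the $B_k$ fail to commute with one another.
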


\begin{proof}
By the foregoing, the unique solution of considered problem is given by
\begin{align*}
y(t)&=T(t)\prod\limits_{k=1}^{i(0,t)}(E+B_k)y_0+\int\limits_0^t T(t-\tau)\prod\limits_{k=1}^{i(\tau,t)}(E+B_k) f(\tau)\, d\tau\\
&+\sum\limits_{0<{\tau}_i<t}T(t-{\tau}_i)\prod\limits_{k=1}^{i({\tau}_i,t)} d_i.
\end{align*}
Using Lemma \ref{lem1}, we obtain
\begin{align*}
y(\omega)&=T(\omega)\prod\limits_{k=1}^{i(0,\omega)}(E+B_k)y_0+\int\limits_0^{\omega}T(\omega-\tau)\prod\limits_{k=1}^{i(\tau,\omega)}(E+B_k) f(\tau)\, d\tau\\
&+\sum\limits_{i=1}^{m}T(\omega-{\tau}_i)\prod\limits_{k=1}^{i({\tau}_i,\omega)} d_i=\rho y_0.
\end{align*}
Now,
\begin{align*}
y_0&=\Biggl(\rho-T(\omega)\prod\limits_{k=1}^{i(0,\omega)}(E+B_k)\Biggr)^{-1}\Biggl(T(\omega-\tau)\prod\limits_{k=1}^{i(\tau,\omega)}(E+B_k) f(\tau)\, d\tau\\
&+\sum\limits_{i=1}^m T(\omega-{\tau}_i)\prod\limits_{k=1}^{i({\tau}_i,\omega)} d_i\Biggr).
\end{align*}
Hence, the solution of (\ref{nonhom}) can be written as
\begin{align*}
y(t)&=\\
&=T(t)\prod\limits_{k=1}^{i(0,t)}(E+B_k)\Biggl(\rho-T(\omega)\prod\limits_{k=1}^{i(0,\omega)}(E+B_k)\Biggr)^{-1}\Biggl(\int\limits_0^{\omega} T(\omega-\tau)\prod\limits_{k=1}^{i(\tau,\omega)}(E+B_k)f(\tau)\, d\tau\\
&+\sum\limits_{i=1}^{m}T(\omega-{\tau}_i)\prod\limits_{k=1}^{i({\tau}_i,\omega)}(E+B_k) d_i\Biggr)+\int\limits_0^t T(t-\tau)\prod\limits_{k=1}^{i(\tau,t)}(E+B_k)f(\tau)\, d\tau\\
&+\sum\limits_{0<{\tau}_i<t} T(t-{\tau}_i)\prod\limits_{k=1}^{i({\tau}_i,t)}(E+B_k) d_i\\
&=\int\limits_0^t T(t)\prod\limits_{k=1}^{i(0,t)}(E+B_k)\Biggl(\rho-T(\omega)\prod\limits_{k=1}^{i(0,\omega)}(E+B_k)\Biggr)^{-1} T(\omega-\tau) \prod\limits_{k=1}^{i(\tau,\omega)}(E+B_k) f(\tau)\, d\tau\\
&+\int\limits_{t}^{\omega} T(t)\prod\limits_{k=1}^{i(0,t)}(E+B_k) T(\omega-\tau) \prod\limits_{k=1}^{i(\tau,\omega)}(E+B_k) f(\tau)\, d\tau\\
&+\sum\limits_{0<{\tau}_i<t}T(t)\prod\limits_{k=1}^{i(0,t)}(E+B_k)\Biggl(\rho-T(\omega)\prod\limits_{k=1}^{i(0,\omega)}(E+B_k)\Biggr)^{-1} T(\omega-{\tau}_i) \prod\limits_{k=1}^{i({\tau}_i,\omega)}(E+B_k) f(\tau)\, d\tau\\
&+\sum\limits_{t\leq{\tau}_i<\omega} T(t)\prod\limits_{k=1}^{i(0,t)}(E+B_k)\Biggl(\rho-T(\omega)\prod\limits_{k=1}^{i(0,\omega)}(E+B_k)\Biggr)^{-1} T(\omega-{\tau}_i) \prod\limits_{k=1}^{i({\tau}_i,\omega)}(E+B_k) d_i
\end{align*}
\begin{align*}
&+\int\limits_0^t T(t-\tau)\prod\limits_{k=1}^{i(\tau,t)}(E+B_k) f(\tau)\, d\tau+\sum\limits_{0<{\tau}_i<t} T(t-{\tau}_i)\prod\limits_{k=1}^{i({\tau}_i,t)}(E+B_k) d_i\\
&=\int\limits_0^t \rho\Biggl(\rho-T(\omega)\prod\limits_{k=1}^{i(0,\omega)}(E+B_k)\Biggr)^{-1}T(t-\tau)\prod\limits_{k=1}^{i(\tau,t)}(E+B_k) f(\tau)\, d\tau\\
&+\int\limits_t^{\omega} T(t+\omega-\tau)\prod\limits_{k=1}^{i(0,t)+i(\tau,\omega)}(E+B_k) \Biggl(\rho-T(\omega)\prod\limits_{k=1}^{i(0,\omega)}(E+B_k)\Biggr)^{-1} f(\tau)\, d\tau\\
&+\sum\limits_{0<{\tau}_i<t}\rho\Biggl(\rho-T(\omega)\prod\limits_{k=1}^{i(0,\omega)}(E+B_k)\Biggr)^{-1} T(t-{\tau}_i)\prod\limits_{k=1}^{i({\tau}_i,t)}(E+B_k) d_i\end{align*}
\begin{align*}
&+\sum\limits_{t\leq{\tau}_i<\omega} T(t+\omega-{\tau}_i)\prod\limits_{k=1}^{i(0,t)+i({\tau}_i,\omega)}(E+B_k)\Biggl(\rho-T(\omega)\prod\limits_{k=1}^{i(0,\omega)}(E+B_k)\Biggr)^{-1} d_i\\
&=\int\limits_0^{\omega} H(t,{\tau}) f(\tau)\, d\tau+\sum\limits_{i=1}^m H(t,{\tau}_i) d_i.
\end{align*}
\end{proof}
Next, we give the following estimates on $\sum_{i=1}^m\|H(t,\tau) d_i\|$ and $\int_0^{\omega}\|H(t,\tau)\|\, d\tau$:

\begin{lem}\label{est1} Let \emph{(A1)--(A4)} hold. Then
$$\sum\limits_{i=1}^m\|H(t,{\tau}_i)\|d_i\leq C_1$$ {\small
\begin{equation*}
\equiv\left\{
\begin{aligned}
&M\max\Biggl\{\Bigl\|\prod\limits_{k=1}^{m}(E+B_k)^2\Bigr\|,1\Biggr\}\cdot\max\bigl\{e^{2\gamma\omega},1\bigr\}\cdot
\Biggl(M\Biggl\|\Biggl(\rho-T(\omega)\prod\limits_{k=1}^{i(0,\omega)}(E+B_k)\Biggr)^{-1}\Biggr\|+1\Biggr)\\
&\times\sum\limits_{1<i<m}e^{\gamma(\omega-{\tau}_i)}\|d_i\|,\quad \gamma>0;\\
&M\max\Biggl\{\Bigl\|\prod\limits_{k=1}^m(E+B_k)^2\Bigr\|,1\Biggr\}\Biggl(M\Biggl\|\Biggl(\rho-T(\omega)\prod\limits_{k=1}^{i(0,\omega)}(E+B_k)\Biggr)^{-1}\Biggr\|+1\Biggr)\\
&\times\sum\limits_{1<i<m}\|d_i\|,\quad \gamma\leq0,\end{aligned}\right.
\end{equation*}}
for any $t\in[0,\omega]$.
\end{lem}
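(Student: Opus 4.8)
The plan is to estimate $\|H(t,\tau_i)\|$ termwise from the explicit formula for $H(\cdot,\cdot)$ supplied by Lemma~\ref{lem3}, and then to sum the resulting bounds over the finitely many impulse points $\tau_1,\dots,\tau_m$ of a period (by (A2) there are exactly $m=i(0,\omega)$ of them in $[0,\omega]$). I fix $t\in[0,\omega]$ and split the sum according to the two branches in the definition of $H$: the terms with $0<\tau_i<t$ (first branch) and those with $t\le\tau_i<\omega$ (second branch). Since the target constant $C_1$ must dominate both branches at once, the whole point is to produce a single upper bound valid in each case.

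For each branch I apply the triangle inequality and submultiplicativity of the operator norm, estimating every occurrence of $T(\cdot)$ by assumption (A9), i.e. $\|T(s)\|\le Me^{\gamma s}$, and bounding each finite product $\prod_{k=1}^{j}(E+B_k)$ with $0\le j\le m$ uniformly by $\max\{\|\prod_{k=1}^{m}(E+B_k)^2\|,1\}$. Only two-fold products of the factors $(E+B_k)$ actually occur in a single summand of $H$, because the impulse counts over complementary subintervals satisfy $i(0,t)+i(t,\omega)=i(0,\omega)=m$, which is the source of the square in this bound. The inverse operator $\bigl(\rho-T(\omega)\prod_{k=1}^{i(0,\omega)}(E+B_k)\bigr)^{-1}$ is bounded by (A4), contributing the factor $\|(\rho-T(\omega)\prod_{k=1}^{i(0,\omega)}(E+B_k))^{-1}\|$, and the additive ``$+E$'' in the first branch is exactly what produces the summand ``$+1$'' inside $\bigl(M\|(\rho-\cdots)^{-1}\|+1\bigr)$ once the common tail $T(t-\tau_i)\prod_{k=1}^{i(0,t)}(E+B_k)$ is factored out.

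The decisive bookkeeping step is the treatment of the exponential factors. In the products defining $H(t,\tau_i)$ the semigroup arguments are $t$, $\omega-t$ (resp.\ $\omega-\tau_i$) and $t-\tau_i$, all nonnegative and no larger than $\omega$. I retain the genuinely $i$-dependent factor $e^{\gamma(\omega-\tau_i)}$ inside the sum and estimate the remaining two exponentials crudely and independently: for $\gamma>0$ each of $e^{\gamma t}$ and $e^{\gamma(\omega-t)}$ is at most $e^{\gamma\omega}$, so their product is at most $e^{2\gamma\omega}$, and $t\le\omega$ lets me replace $e^{\gamma(t-\tau_i)}$ by $e^{\gamma(\omega-\tau_i)}$; for $\gamma\le0$ every exponential with nonnegative argument is at most $1$ and simply disappears. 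This non-cancelling estimate (hence $e^{2\gamma\omega}$ rather than the sharper $e^{\gamma\omega}$) is precisely the dichotomy recorded in the two cases of $C_1$.

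Assembling these estimates gives, in either branch, $\|H(t,\tau_i)\|\le M\max\{\|\prod_{k=1}^{m}(E+B_k)^2\|,1\}\cdot\max\{e^{2\gamma\omega},1\}\cdot\bigl(M\|(\rho-\cdots)^{-1}\|+1\bigr)\,e^{\gamma(\omega-\tau_i)}$ when $\gamma>0$, and the same bound without the exponential prefactors when $\gamma\le0$. Multiplying by $\|d_i\|$ and summing over $i=1,\dots,m$ yields $\sum_{i=1}^{m}\|H(t,\tau_i)\|\,\|d_i\|\le C_1$ uniformly in $t\in[0,\omega]$, as asserted. I expect the main obstacle to be organizational rather than analytical: verifying that one constant dominates both branches simultaneously, which is exactly what forces both the crude exponential estimate and the product bound that is insensitive to the actual value of $i(0,t)$.
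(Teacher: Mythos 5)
Your proposal is correct and follows essentially the same route as the paper's proof: split the sum over the two branches of $H$, bound every semigroup factor via (A9) and every product $\prod(E+B_k)$ by $\max\bigl\{\bigl\|\prod_{k=1}^{m}(E+B_k)^2\bigr\|,1\bigr\}$, factor the ``$+E$'' term into the summand $\bigl(M\bigl\|\bigl(\rho-T(\omega)\prod_{k=1}^{m}(E+B_k)\bigr)^{-1}\bigr\|+1\bigr)$, and treat the cases $\gamma>0$ and $\gamma\leq0$ separately. The only (harmless) divergence is in the exponential bookkeeping: you bound $e^{\gamma t}$ and $e^{\gamma(\omega-t)}$ separately, landing exactly on the stated factor $\max\{e^{2\gamma\omega},1\}$, whereas the paper keeps the combined exponent $e^{\gamma(\omega+t-\tau_i)}$ and in fact establishes the slightly sharper constant with $\max\{e^{\gamma\omega},1\}$, which it then records in the weaker stated form.
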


\begin{proof}
We have
\begin{align*}
\sum\limits_{i=1}^m\|H(t,{\tau}_i)\|\cdot\|d_i\|
=\sum\limits_{0<{\tau}_i<t}\|H(t,{\tau}_i)\|\cdot\|d_i\|+\sum\limits_{t\leq{\tau}_i<\omega}\|H(t,{\tau}_i)\|\cdot\|d_i\| ,
\end{align*}
so that
\begin{align*}
&\sum\limits_{i=1}^m\|H(t,{\tau}_i)\|\cdot\|d_i\|\\
&\leq\sum\limits_{0<{\tau}_i<t}\Biggl(\|T(t)\|\biggl\|\prod\limits_{k=1}^{i(0,t)}(E+B_k)\Biggr\|\cdot \Biggl\|\Biggl(\rho-T(\omega)\prod\limits_{k=1}^{i(0,\omega)}(E+B_k)\Biggr)^{-1}\Biggr\| \cdot\|T(\omega-{\tau}_i)\|\\
&\times \Biggl\|\prod\limits_{k=1}^{i({\tau}_i,\omega)}(E+B_k)\Biggr\|+\|T(t-{\tau}_i)\|\cdot \Biggl\|\prod\limits_{k=1}^{i({\tau}_i,t)}(E+B_k)\Biggr\|\Biggr)\|d_i\|+ \sum\limits_{t\leq{\tau}_i<\omega}\|T(t)\|\\
&\times\|\prod\limits_{k=1}^{i(0,t)}(E+B_k)\|\cdot \Biggl\|\Biggl(\rho-T(\omega)\prod\limits_{k=1}^{i(0,\omega)}(E+B_k)\Biggr)^{-1}\Biggr\|\cdot\|T(\omega-{\tau}_i)\|\cdot \Biggl\|\prod\limits_{k=1}^{i({\tau}_i,\omega)}(E+B_k)\Biggr\|\cdot\|d_i\| \\
&\leq\sum\limits_{0<{\tau}_i<\omega}\|T(t)\|\cdot \Biggl\|\prod\limits_{k=1}^{i(0,t)}(E+B_k)\Biggr\|\cdot \Biggl\|\Biggl(\rho-T(\omega)\prod\limits_{k=1}^{i(0,\omega)}(E+B_k)\Biggr)^{-1}\Biggr\|
\cdot\|T(\omega-{\tau}_i)\|\\
&\times \Biggl\|\prod\limits_{k=1}^{i({\tau}_i,\omega)}(E+B_k)\cdot \Biggr\| \|d_i\|+\sum\limits_{0<{\tau}_i<t}\|T(t-{\tau}_i)\|\cdot \Biggl\|\prod\limits_{k=1}^{i({\tau}_i,t)}(E+B_k)
\Biggr\|\cdot\|d_i\|\\
&\leq \Biggl\|\Biggl(\rho-T(\omega)\prod\limits_{k=1}^{i(0,\omega)}(E+B_k)\Biggr)^{-1}\Biggr\|\sum\limits_{0<{\tau}_i<\omega}Me^{\gamma t}\Biggl \|\prod\limits_{k=1}^{i(0,t)}(E+B_k)\Biggr\|Me^{\gamma(\omega-{\tau}_i)}\\
&\times \Biggl\|\prod\limits_{k=1}^{i({\tau}_i,\omega)}(E+B_k)\Biggr\|\cdot\|d_i\|+\sum\limits_{0<{\tau}_i<t}Me^{\gamma(t-{\tau}_i)} \Biggl\|\prod\limits_{k=1}^{i({\tau}_i,t)}(E+B_k)\Biggr\| \cdot\|d_i\|\\
&\leq \Biggl\|\Biggl(\rho-T(\omega)\prod\limits_{k=1}^{i(0,\omega)}(E+B_k)\Biggr)^{-1}\Biggr\|\sum\limits_{0<{\tau}_i<\omega}M^2e^{\gamma(\omega+t-{\tau}-i)} \Biggl\|\prod\limits_{k=1}^{i(0,t)}(E+B_k)\Biggr\|\\
&\times \Biggl\|\prod\limits_{k=1}^{i({\tau}_i,\omega)}(E+B_k)\Biggr\|\cdot\|d_i\|
+\sum\limits_{0<{\tau}_i<t}Me^{\gamma(t-{\tau}_i)}\Biggl \|\prod\limits_{k=1}^{i({\tau}_i,t)}(E+B_k)\Biggr\|\cdot\|d_i\|\\
&\leq M\max\Biggl\{\prod\limits_{k=1}^{i(0,\omega)}(E+B_k)^2\|,1\Biggr\}\cdot\Biggl(\Biggl\|\Biggl(\rho-T(\omega)\prod\limits_{k=1}^{i(0,\omega)}(E+B_k)\Biggr)^{-1}\Biggr\|\\ &\times\sum\limits_{0<{\tau}_i<\omega}Me^{\gamma(\omega+t-{\tau}_i)}\|d_i\|+\sum\limits_{0<{\tau}_i<t}e^{\gamma(t-{\tau}_i)}\|d_i\|\Biggr),
\end{align*}
for any $t\in[0,\omega]$. We will consider separately the following two cases: $\gamma>0$ and $\gamma\leq 0$.
For $\gamma>0$, we have
\begin{align*}
&\sum\limits_{i=1}^m\|H(t,{\tau}_i)\|\cdot\|d_i\|\\
&\leq M\max\Biggl\{\Biggl\|\prod\limits_{k=1}^{i(0,\omega)}(E+B_k)^2\Biggr\|,1\Biggr\}\cdot\Biggl(\Biggl\|\Biggl(\rho-T(\omega)\prod\limits_{k=1}^{i(0,\omega)} (E+B_k)\Biggr)^{-1}\Biggr\|\end{align*}
\begin{align*}
&\times\sum\limits_{0<{\tau}_i<\omega}Me^{\gamma(\omega+t-{\tau}_i)}\|d_i\|+\sum\limits_{0<{\tau}_i<t}e^{\gamma(t-{\tau}_i)}\|d_i\|\Biggr)\\
&\leq M\max\Biggl\{\Biggl\|\prod\limits_{k=1}^{i(0,\omega)}(E+B_k)^2\Biggr\|,1\Biggr\}\max\{e^{\gamma\omega},1\}\cdot \Biggl(\Biggl\|\Biggl(\rho-T(\omega)\prod\limits_{k=1}^{i(0,\omega)}(E+B_k)\Biggr)^{-1}\Biggr\|\\
&\times\sum\limits_{0<{\tau}_i<\omega}Me^{\gamma(t-{\tau}_i)}\|d_i\|+\sum\limits_{0<{\tau}_i<t}e^{\gamma(t-{\tau}_i)}\|d_i\|\Biggr)\\
&\leq M\max\Biggl\{\Biggl\|\prod\limits_{k=1}^{i(0,\omega)}(E+B_k)^2\Biggr\|,1\Biggr\}\max \bigl\{e^{\gamma\omega},1\bigr\}\\
&\times\Biggl(M\Biggl\|\Biggl(\rho-T(\omega)\prod\limits_{k=1}^{i(0,\omega)}(E+B_k)\Biggr)^{-1}\Biggr\|+1\Biggr)\cdot\sum\limits_{1<i<m}e^{\gamma(\omega-{\tau}_i)}\|d_i\|.
\end{align*}
For $\gamma\leq0$, we have
\begin{align*}
&\sum\limits_{i=1}^m\|H(t,{\tau}_i)\|\cdot\|d_i\|\\
&\leq M\max\Biggl\{\Biggl\|\prod\limits_{k=1}^{i(0,\omega)}(E+B_k)^2\Biggr\|,1\Biggr\}\cdot\Biggl(\Biggl\|\Biggl(\rho-T(\omega)\prod\limits_{k=1}^{i(0,\omega)}(E+B_k)\Biggr)^{-1}\Biggr\| \end{align*}
\begin{align*}
&\times\sum\limits_{0<{\tau}_i<\omega}Me^{\gamma(\omega+t-{\tau}_i)}\|d_i\|+\sum\limits_{0<{\tau}_i<t}e^{\gamma(t-{\tau}_i)}\|d_i\|\Biggr)\\
&\leq M\max\Biggl\{\Biggl\|\prod\limits_{k=1}^{i(0,t)}(E+B_k)^2\Biggr\|,1\Biggr\}\cdot\Biggl(M\Biggl\|\Biggl(\rho-T(\omega)\prod\limits_{k=1}^{i(0,\omega)}(E+B_k)\Biggr)^{-1}\Biggr\|+1\Biggr)\\ &\times\sum\limits_{1<i<m}\|d_i\|.
\end{align*}
\end{proof}
Under an additional condition, we can state the following modified version of previous lemma:

\begin{lem}\label{est11}
Let \emph{(A1)--(A4)} hold, and let
$$
\Biggl(\rho-T(\omega)\prod\limits_{k=1}^{i(0,\omega)}(E+B_k)\Biggr)^{-1}T(t)=T(t)\Biggl(\rho-T(\omega)\prod\limits_{k=1}^{i(0,\omega)}(E+B_k)\Biggr)^{-1},\quad t\geq 0.
$$
Then
\begin{equation*}
\sum\limits_{i=1}^{m}\|H(t,{\tau}_i)d_i\|\\
\leq  C'_1\equiv
\left\{
\begin{aligned}
&M\Biggl\|\Biggl(\rho-T(\omega)\prod\limits_{k=1}^{i(0,\omega)}(E+B_k)\Biggr)^{-1}\Biggr\|\max\Biggl\{\Biggl\|\prod\limits_{k=1}^{i(0,\omega)}(E+B_k)\Biggr\|,1\Biggr\}\\
&\times\max\bigl\{\|T\|,e^{\gamma\omega}\bigr\}\cdot\sum\limits_{0<{\tau}_i<\omega}e^{\gamma(\omega-{\tau}_i)}\|d_i\|,\quad\gamma>0;\\
&M\Biggl\|\Biggl(\rho-T(\omega)\prod\limits_{k=1}^{i(0,\omega)}(E+B_k)\Biggr)^{-1}\Biggr\|\max\Biggl\{\Biggl\|\prod\limits_{k=1}^{i(0,\omega)}(E+B_k)\Biggr\|,1\Biggr\}\\
&\times\max\{\|T\||,1\}\cdot\sum\limits_{1<i<m}\|d_i\|,\quad \gamma\leq0.
\end{aligned}\right.
\end{equation*}
\end{lem}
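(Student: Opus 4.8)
The plan is to run the same estimation scheme as in the proof of Lemma~\ref{est1}, the crucial simplification being that, under the additional commutativity hypothesis, the kernel $H(t,\tau)$ may be replaced by the reduced expression obtained in the preceding lemma. Writing $P:=\prod_{k=1}^{i(0,\omega)}(E+B_k)$ and $R:=\bigl(\rho-T(\omega)P\bigr)^{-1}$ for brevity, the hypothesis is exactly $R\,T(t)=T(t)\,R$ for $t\ge 0$, and the reduced kernel reads $H(t,\tau)=\rho R\,T(t-\tau)\prod_{k=1}^{i(\tau,t)}(E+B_k)$ on the range $0<\tau<t$, and $H(t,\tau)=T(t+\omega-\tau)\prod_{k=1}^{i(0,t)+i(\tau,\omega)}(E+B_k)\,R$ on the range $t\le\tau<\omega$. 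Note that in each branch the finite product of the factors $E+B_k$ occurs only once, rather than twice as in Lemma~\ref{est1}; this is precisely why the square drops out of the product-norm in $C'_1$.

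First I would split the sum according to the two branches,
\[
\sum_{i=1}^m\|H(t,\tau_i)\,d_i\|=\sum_{0<\tau_i<t}\|H(t,\tau_i)\,d_i\|+\sum_{t\le\tau_i<\omega}\|H(t,\tau_i)\,d_i\|,
\]
and estimate each summand by submultiplicativity of the operator norm. For the semigroup factors I apply (A9): on the first range $\|T(t-\tau_i)\|\le Me^{\gamma(t-\tau_i)}$, and on the second $\|T(t+\omega-\tau_i)\|\le Me^{\gamma(t+\omega-\tau_i)}$. Every partial product $\prod_{k=1}^{j}(E+B_k)$ appearing has at most $m=i(0,\omega)$ factors (the empty product being $E$), so, as in Lemma~\ref{est1}, it is majorised by $\max\{\|P\|,1\}$; the inverse factor contributes $\|R\|$ throughout, while the first branch carries an extra $\|\rho\|$. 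Collecting these bounds yields, for every $t\in[0,\omega]$,
\[
\sum_{i=1}^m\|H(t,\tau_i)\,d_i\|\le M\|R\|\max\{\|P\|,1\}\Bigl(\|\rho\|\sum_{0<\tau_i<t}e^{\gamma(t-\tau_i)}\|d_i\|+\sum_{t\le\tau_i<\omega}e^{\gamma(t+\omega-\tau_i)}\|d_i\|\Bigr).
\]

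It then remains to discharge the exponentials by splitting on the sign of $\gamma$. For $\gamma>0$ I use $t\le\omega$ to get $e^{\gamma(t-\tau_i)}\le e^{\gamma(\omega-\tau_i)}$ on the first range and $e^{\gamma(t+\omega-\tau_i)}\le e^{\gamma\omega}e^{\gamma(\omega-\tau_i)}$ on the second; hence both sums are dominated by $\max\{\|\rho\|,e^{\gamma\omega}\}\sum_{0<\tau_i<\omega}e^{\gamma(\omega-\tau_i)}\|d_i\|$, which is the asserted $\gamma>0$ bound (the constant written $\|T\|$ in the statement of $C'_1$ being, by the above, the operator norm $\|\rho\|$ issuing from the first branch). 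For $\gamma\le 0$ all the arguments $t-\tau_i$ and $t+\omega-\tau_i$ are nonnegative, so every exponential is $\le 1$, and the two sums collapse to $\max\{\|\rho\|,1\}\sum_{0<\tau_i<\omega}\|d_i\|$, giving the second case.

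The \emph{main obstacle} is the bookkeeping forced by the asymmetry of the reduced kernel: the two branches carry the operators $\rho$ and $R$ in different positions and, more importantly, different semigroup arguments ($t-\tau_i$ versus the larger $t+\omega-\tau_i$), so the exponential estimates must be arranged so that both branches reduce to one common sum $\sum e^{\gamma(\omega-\tau_i)}\|d_i\|$ (resp. $\sum\|d_i\|$) with a single multiplicative constant. Here the commutativity hypothesis is indispensable: it is what legitimises the reduced form of $H$ borrowed from the preceding lemma and the consequent free movement of $R$ relative to the semigroup. Once the exponential factors are uniformly controlled by $\max\{\|\rho\|,e^{\gamma\omega}\}$ (resp. $\max\{\|\rho\|,1\}$) and the partial products by $\max\{\|P\|,1\}$, the stated inequality for $C'_1$ follows.
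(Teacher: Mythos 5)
Your proof is correct and follows essentially the same route as the paper's: both insert the reduced (commuted) form of $H(t,\tau)$, split the sum over the ranges $0<\tau_i<t$ and $t\le\tau_i<\omega$, bound the semigroup factors via (A9) and the partial products by $\max\{\|\prod_{k=1}^{i(0,\omega)}(E+B_k)\|,1\}$, and then discharge the exponentials by the same case split on the sign of $\gamma$. Your identification of the factor written $\|T\|$ in $C_1'$ as the norm of the operator $\rho$ coming from the first branch is exactly the role that factor plays in the paper's own estimate, so the two arguments coincide.
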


\begin{proof}
Using (A9), we obtain
\begin{align*}
&\sum\limits_{i=1}^m\|H(t,{\tau}_i)\|\cdot\|d_i\|=\sum\limits_{0<{\tau}_i<t}\|H(t,{\tau}_i)\|\cdot\|d_i\|+\sum\limits_{t\leq{\tau}_i<\omega} \|H(t,{\tau}_i)\|\cdot\|d_i\|\\
&\leq \sum\limits_{0<{\tau}_i<t}\|T\|\cdot \Biggl\|\Biggl(\rho-T(\omega)\prod\limits_{k=1}^{i(0,\omega)}(E+B_k)\Biggr)^{-1}\Biggr\| \cdot \bigl\|T(t-{\tau}_i)\bigr\| \cdot \Biggl\|\prod\limits_{k=1}^{i({\tau}_i,t)}(E+B_k)\Biggr\|\\
&+\sum\limits_{t\leq{\tau}_i<\omega}\bigl\|T(t+\omega-{\tau}_i)\bigr\|\cdot \Biggl\|\prod\limits_{k=1}^{i(0,t)+i({\tau}_i,\omega)}(E+B_k)\Biggr\| \cdot \Biggl\|\Biggl(\rho-T(\omega)\prod\limits_{k=1}^{i(0,\omega)}(E+B_k)\Biggr)^{-1}\Biggr\|\cdot\|d_i\| \\
&\leq M \Biggl\|\Biggl(\rho-T(\omega)\prod\limits_{k=1}^{i(0,\omega)}(E+B_k)\Biggr)^{-1}\Biggr\| \max\Biggl\{\Biggl\|\prod\limits_{k=1}^{i(0,\omega)}(E+B_k)\Biggl\|,1\Biggr\}\\
&\times\Biggl(\sum\limits_{0<{\tau}_i<t}\|T\|e^{\gamma(t-{\tau}_i)}\|d_i\|+\sum\limits_{t\leq{\tau}_i\leq\omega} e^{\gamma(t+\omega-{\tau}_i)}\|d_i\|\Biggr),
\end{align*}
for all $t\in[0,\omega]$. The following two cases emerge: $\gamma>0$ and $\gamma\leq0$.
For $\gamma>0$, we have
\begin{align*}
&\sum\limits_{i=1}^m\|H(t,{\tau}_i)\|\cdot\|d_i\| \\
&\leq M\Biggl\|\Biggl(\rho-T(\omega)\prod\limits_{k=1}^{i(0,\omega)}(E+B_k)\Biggr)^{-1}\Biggr\|\max\Biggl\{\Biggl\|\prod\limits_{k=1}^{i(0,\omega)}(E+B_k)\Biggr\|,1\Biggr\}\\
&\times\Biggl(\sum\limits_{0<{\tau}_i<t}\|T\| e^{\gamma(\omega-{\tau}_i)}\|d_i\|+\sum\limits_{t\leq{\tau}_i\leq\omega}e^{\gamma(2\omega-{\tau}_i)}\|d_i\|\Biggr)\\
&\leq M\Biggl\|\Biggl(\rho-T(\omega)\prod\limits_{k=1}^{i(0,\omega)}(E+B_k)\Biggr)^{-1}\Biggr\|\max\Biggl\{\Biggl\|\prod\limits_{k=1}^{i(0,\omega)}(E+B_k)\Biggr\|,1\Biggr\}\\
&\times\max \bigl\{\|T\|,e^{\gamma\omega}\bigr\}\cdot\sum\limits_{0<{\tau}_i<\omega}e^{\gamma(\omega-{\tau}_i)}\|d_i\|.
\end{align*}
For $\gamma\leq0$, we have
\begin{align*}
&\sum\limits_{i=1}^m\|H(t,{\tau}_i)\|\cdot\|d_i\|\\
&\leq M\Biggl\|\Biggl(\rho-T(\omega)\prod\limits_{k=1}^{i(0,\omega)}(E+B_k)\Biggr)^{-1}\Biggr\|\max\Biggl\{\Biggl\|\prod\limits_{k=1}^{i(0,\omega)}(E+B_k)\Biggr\|,1\Biggr\}\\
&\times\Biggl(\sum\limits_{0<{\tau}_i<t}\|T\|\cdot\|d_i\|+\sum\limits_{t\leq{\tau}_i\leq\omega}\|d_i\|\Biggr)\\
&\leq M\Biggl\|\Biggl(\rho-T(\omega)\prod\limits_{k=1}^{i(0,\omega)}(E+B_k)\Biggr)^{-1}\Biggr\|\max\Biggl\{\Biggl\|\prod\limits_{k=1}^{i(0,\omega)}(E+B_k)\Biggr\|,1\Biggr\}\\
&\times\max\{\|T\|,1\}\cdot\sum\limits_{1<i<m}\|d_i\|.
\end{align*}
This completes the proof of lemma.
\end{proof}

We also need the following lemma:

\begin{lem}\label{est2} Let \emph{(A1)--(A4)} hold, and let $t\in(0,\omega)$. Then
\begin{equation*}
\int\limits_0^{\omega}\|H(t,{\tau})\|\, d\tau\leq C_2\end{equation*}
\begin{equation*}
\equiv \left\{
\begin{aligned}
&\Biggl\{M\max\Biggl\{\Biggl\|\prod\limits_{k=1}^{i(0,\omega)}(E+B_k)^2\Biggr\|,1\Biggr\}\Biggl\|\Biggl(\rho-T(\omega)\prod\limits_{k=1}^{i(0,\omega)}(E+B_k)\Biggr)^{-1}\Biggr\| e^{\gamma\omega}\\
&+\max\Biggl\{\Biggl\|\prod\limits_{k=1}^{i(0,\omega)}(E+B_k)\Biggr\|,1\Biggr\}\Biggr\}M\frac{e^{\gamma\omega}-1}{\gamma},\quad \gamma\neq0;\\
&\Biggl\{M\max\Biggl\{\Biggl\|\prod\limits_{k=1}^{i(0,\omega)}(E+B_k)^2\Biggr\|,1\Biggr\}\Biggl\|\Biggl(\rho-T(\omega)\prod\limits_{k=1}^{i(0,\omega)}(E+B_k)\Biggr)^{-1}\Biggr\|\\
&+\max\Biggl\{\Biggl\|\prod\limits_{k=1}^{i(0,\omega)}(E+B_k)\Biggr\|,1\Biggr\}M\omega,\quad \gamma=0.
\end{aligned}\right.
\end{equation*}
\end{lem}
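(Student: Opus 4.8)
The plan is to mimic the proof of Lemma \ref{est1} almost verbatim, with the sum over impulse points replaced by an integral over $\tau$. Starting from the two-branch expression for $H(t,\tau)$ established in Lemma \ref{lem3}, I would split
\[
\int_0^{\omega}\|H(t,\tau)\|\,d\tau=\int_0^{t}\|H(t,\tau)\|\,d\tau+\int_t^{\omega}\|H(t,\tau)\|\,d\tau
\]
and estimate the two branches $0<\tau<t$ and $t\le\tau<\omega$ separately.

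The first step on each branch is to exploit the commutativity granted by (A1) and (A3): since $T(s)$ commutes with every $B_k$ and with $\rho$, the factors $T(t)\prod_{k=1}^{i(0,t)}(E+B_k)$ and $T(\omega-t)\prod_{k=1}^{i(t,\omega)}(E+B_k)$ combine, via the semigroup law $T(t)T(\omega-t)=T(\omega)$ and $i(0,t)+i(t,\omega)=i(0,\omega)$, into the one-period operator $T(\omega)\prod_{k=1}^{i(0,\omega)}(E+B_k)$. This is precisely what collapses the naive three $T$-factors and three $(E+B_k)$-products into a single power of $M$, a single factor $e^{\gamma\omega}$, and a residual product of only two $(E+B_k)$-blocks in the leading term. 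I would then apply submultiplicativity of the operator norm together with (A9), bounding each $T(\cdot)$ by $Me^{\gamma(\cdot)}$, each single product by $\max\{\|\prod_{k=1}^{i(0,\omega)}(E+B_k)\|,1\}$, and each residual product of two blocks by $\max\{\|\prod_{k=1}^{i(0,\omega)}(E+B_k)^2\|,1\}$, exactly as in Lemma \ref{est1}. After this, the leading term of the $0<\tau<t$ branch is controlled by $M\max\{\|\prod^2\|,1\}\,\|(\rho-T(\omega)\prod(E+B_k))^{-1}\|\,e^{\gamma\omega}\cdot Me^{\gamma(t-\tau)}$, the trailing $+E$ term by $\max\{\|\prod\|,1\}\cdot Me^{\gamma(t-\tau)}$, and the $t\le\tau<\omega$ branch (which has no $+E$ summand) by a quantity dominated by the same leading expression.

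What remains are elementary exponential integrals. The one observation I would isolate is that for $s\in[0,\omega]$ the map $s\mapsto\int_0^se^{\gamma u}\,du=\frac{e^{\gamma s}-1}{\gamma}$ has derivative $e^{\gamma s}>0$, hence is increasing and positive for every $\gamma\ne0$; consequently each of $\int_0^te^{\gamma(t-\tau)}\,d\tau$ and $\int_t^{\omega}e^{\gamma(\omega-\tau)}\,d\tau$ is at most $\frac{e^{\gamma\omega}-1}{\gamma}$ (valid for both signs of $\gamma$), while for $\gamma=0$ each integral is at most $\omega$. Factoring out the common integral factor $M\frac{e^{\gamma\omega}-1}{\gamma}$ (respectively $M\omega$) and adding the leading-term contribution (carrying $\max\{\|\prod^2\|,1\}$, $\|(\rho-\cdots)^{-1}\|$ and $e^{\gamma\omega}$) to the $+E$ contribution (carrying only $\max\{\|\prod\|,1\}$) produces exactly the stated constant $C_2$ in the two cases $\gamma\ne0$ and $\gamma=0$.

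I do not anticipate a substantive obstacle: this is a routine norm estimate whose structure is identical to that of Lemma \ref{est1}. The only delicate points are purely organizational — making sure the period factor $e^{\gamma\omega}$ and the squared product attach to the resolvent (leading) term but not to the $+E$ term, and verifying that the single leading bound coming from the $0<\tau<t$ branch is genuinely an upper bound for the $t\le\tau<\omega$ branch, so that both branches can be subsumed under one expression. The monotonicity remark above is what allows the single formula $\frac{e^{\gamma\omega}-1}{\gamma}$ to cover the entire range $\gamma\ne0$, including negative $\gamma$.
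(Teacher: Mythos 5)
Your overall strategy is indeed the paper's: split the integral at $t$, expand the two branches of $H(t,\tau)$ from Lemma \ref{lem3}, collapse semigroup factors and $(E+B_k)$-blocks by commutation, invoke (A9), and finish with elementary exponential integrals and the case split $\gamma\neq0$ versus $\gamma=0$. However, your first key step is both unjustified and unnecessary. Collapsing $T(t)\prod_{k=1}^{i(0,t)}(E+B_k)$ and $T(\omega-t)\prod_{k=1}^{i(t,\omega)}(E+B_k)$ into $T(\omega)\prod_{k=1}^{i(0,\omega)}(E+B_k)$ requires moving a semigroup factor \emph{across} the operator $\bigl(\rho-T(\omega)\prod_{k=1}^{i(0,\omega)}(E+B_k)\bigr)^{-1}$ that sits between them, i.e., it requires $T(s)$ to commute with this inverse, hence with $\rho$. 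That commutation is not granted by (A1)--(A4): (A3) only states $\rho A=A\rho$, and passing from commutation with the generator to commutation with the semigroup is an additional (standard, but not free) argument. Tellingly, the paper itself elevates exactly this resolvent commutation to an explicit extra hypothesis in Lemmas \ref{est11} and \ref{est22}, and its proof of Lemma \ref{est2} deliberately avoids it: it combines factors only on the same side of the inverse, using $T(\omega-t)T(t-\tau)=T(\omega-\tau)$ and the products' merger $\prod_{k=1}^{i(t,\omega)}(E+B_k)\prod_{k=1}^{i(\tau,t)}(E+B_k)=\prod_{k=1}^{i(\tau,\omega)}(E+B_k)$, which needs only (A1). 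Since $e^{\gamma\omega}e^{\gamma(t-\tau)}=e^{\gamma t}e^{\gamma(\omega-\tau)}=e^{\gamma(\omega+t-\tau)}$, this yields the very same bound $M^{2}e^{\gamma(\omega+t-\tau)}$ that your grouping aims for, so nothing is gained by commuting across the resolvent.

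There is also a genuine gap in your end-game accounting. The resolvent (leading) term occurs in \emph{both} branches, over $(0,t)$ and over $(t,\omega)$, yet in the stated constant $C_2$ its coefficient appears only once. Your justification --- that each of $\int_0^{t}e^{\gamma(t-\tau)}\,d\tau$ and $\int_t^{\omega}e^{\gamma(\omega-\tau)}\,d\tau$ is separately at most $\frac{e^{\gamma\omega}-1}{\gamma}$ --- lets you bound each branch's leading contribution by (coefficient)$\times M\frac{e^{\gamma\omega}-1}{\gamma}$, but adding the two branches then produces that coefficient \emph{twice}, i.e., a constant roughly double the stated $C_2$. To land exactly on $C_2$ one must merge the two leading contributions into the single integral before estimating, as the paper does: $\int_0^{\omega}e^{\gamma(\omega+t-\tau)}\,d\tau=e^{\gamma t}\,\frac{e^{\gamma\omega}-1}{\gamma}$, and only then compare $e^{\gamma t}$ with $e^{\gamma\omega}$. (Incidentally, this last comparison $e^{\gamma t}\le e^{\gamma\omega}$ holds only for $\gamma\ge 0$, so for $\gamma<0$ the factor $e^{\gamma\omega}$ in $C_2$ should really be $\max\{e^{\gamma\omega},1\}$; this defect is present in the paper's own proof as well, so it is not held against you --- but it does show that your monotonicity remark does not, by itself, make the argument ``valid for both signs of $\gamma$'' all the way to the stated constant.)
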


\begin{proof}
We have
\begin{align*}
&\int\limits_{0}^{\omega}\|H(t,\tau)\|\,d\tau\\
&=\int\limits_0^t\Biggl\|\Biggl(T(t)\prod\limits_{k=1}^{i(0,t)}(E+B_k)\Biggl(\rho-T(\omega)\prod\limits_{k=1}^{i(0,\omega)}(E+B_k)\Biggr)^{-1}T(\omega-t) \prod\limits_{k=1}^{i(t,\omega)}(E+B_k)+E\Biggr)\\
&\times T(t-\tau)\prod\limits_{k=1}^{i(\tau,t)}(E+B_k)\Biggr\|\, d\tau\\
&+\int\limits_{t}^{\omega}\Biggl\|T(t)\prod\limits_{k=1}^{i(0,t)}(E+B_k)\Biggl(\rho-T(\omega)\prod\limits_{k=1}^{i(0,\omega)}(E+B_k)\Biggr)^{-1} T(\omega-\tau) \prod\limits_{k=1}^{i(\tau,\omega)}(E+B_k)\Biggr\|\, d\tau\\
&\leq \int\limits_0^t \|T(t)\|\cdot \Biggl\|\prod\limits_{k=1}^{i(0,t)}(E+B_k)\Biggr\|\cdot \Biggl\|\Biggl(\rho-T(\omega)\prod\limits_{k=1}^{i(0,\omega)}(E+B_k)\Biggr)^{-1}\Biggr\| \cdot\|T(\omega-\tau)\|\\
&\times \Biggl\|\prod\limits_{k=1}^{i(\tau,\omega)}(E+B_k)\Biggr\|\, d\tau+\int\limits_{0}^t\|T(t-\tau)\|\cdot \Biggl\|\prod\limits_{k=1}^{i(\tau,t)}(E+B_k)\Biggr\|\, d\tau\\
&+\int\limits_t^{\omega}\|T(t)\|\cdot \Biggl\|\prod\limits_{k=1}^{i(0,t)}(E+B_k)\Biggr\|\cdot \Biggl\|\Biggl(\rho-T(\omega)\prod\limits_{k=1}^{i(0,\omega)}(E+B_k)\Biggr)^{-1}\Biggr\| \\
&\times\|T(\omega-\tau)\|\cdot \Biggl\|\prod\limits_{k=1}^{i(\tau,\omega)}(E+B_k)\Biggr\|\, d\tau\\
&\leq M^2\max\Biggl\{\Biggl\|\prod\limits_{k=1}^{i(0,\omega)}(E+B_k)^2\Biggr\|,1\Biggr\}\Biggl\|\Biggl(\rho-T(\omega)\prod\limits_{k=1}^{i(0,\omega)}(E+B_k)\Biggr)^{-1}\Biggr\| \cdot\int\limits_{0}^{\omega}e^{\gamma(\omega+t-\tau)}\, d\tau\\
&+M\max\Biggl\{\Biggl\|\prod\limits_{k=1}^{i(0,\omega)}(E+B_k)\Biggr\|,1\Biggr\}\cdot\int\limits_{0}^te^{\gamma(t-\tau)}\, d\tau.
\end{align*}
Now, we have two subcases: $\gamma\neq0$ and $\gamma=0$.
For $\gamma\neq0$, we have
\begin{align*}
&\int\limits_{0}^{\omega}\|H(t,{\tau})\|\, d\tau\\
&\leq M^2\max\Biggl\{\Biggl\|\prod\limits_{k=1}^{i(0,\omega)}(E+B_k)^2\Biggr\|,1\Biggr\}\cdot \Biggl\|\Biggl(\rho-T(\omega)\prod\limits_{k=1}^{i(0,\omega)}(E+B_k)\Biggr)^{-1}\Biggr\| \cdot\int\limits_{0}^{\omega}e^{\gamma(\omega+t-\tau)}\,d\tau\\
&+M\max\Biggl\{\|\prod\limits_{k=1}^{i(0,\omega)}(E+B_k)\|,1\Biggr\}\cdot\int\limits_0^te^{\gamma(t-\tau)}\, d\tau\end{align*}
\begin{align*}
&\leq M^2\max\Biggl\{\Biggl\|\prod\limits_{k=1}^{i(0,\omega)}(E+B_k)^2\Biggr\|,1\Biggr\}\cdot \Biggl\|\Biggl(\rho-T(\omega)\prod\limits_{k=1}^{i(0,\omega)}(E+B_k)\Biggr)^{-1}\Biggr\| \frac{e^{\gamma(t+\omega)}-e^{\gamma t}}{\gamma}\\
&+M\max\Biggl\{\Biggl\|\prod\limits_{k=1}^{i(0,\omega)}(E+B_k)\Biggr\|,1\Biggr\}\cdot\frac{e^{\gamma t}-1}{\gamma}\\
&\leq\Biggl\{ M\max\Biggl\{\Biggl\|\prod\limits_{k=1}^{i(0,\omega)}(E+B_k)^2\Biggr\|,1\Biggr\}\cdot \Biggl\|\Biggl(\rho-T(\omega)\prod\limits_{k=1}^{i(0,\omega)}(E+B_k)\Biggr)^{-1}\Biggr\| e^{\gamma\omega}\\
&+\max\Biggl\{\Biggl\|\prod\limits_{k=1}^{i(0,\omega)}(E+B_k)\Biggr\|,1\Biggr\}\Biggr\}M\frac{e^{\gamma\omega}-1}{\gamma}.
\end{align*}
For $\gamma=0$, we have
\begin{align*}
&\int\limits_{0}^{\omega}\|H(t,\tau)\|\, d\tau\\
&\leq M^2\max\Biggl\{\Biggl\|\prod\limits_{k=1}^{i(0,\omega)}(E+B_k)^2\Biggr\|,1\Biggr\}\cdot \Biggl\|\Biggl(\rho-T(\omega)\prod\limits_{k=1}^{i(0,\omega)}(E+B_k)\Biggr)^{-1}\Biggr\| \cdot\int\limits_{0}^{\omega}e^{\gamma(\omega+t-\tau)}\, d\tau\\
&+M\max\Biggl\{\Biggl\|\prod\limits_{k=1}^{i(0,\omega)}(E+B_k)\Biggr\|,1\Biggr\}\cdot\int\limits_0^te^{\gamma(t-\tau)}\, d\tau\\
&\leq\Biggl\{M\max\Biggl\{\Biggl\|\prod\limits_{k=1}^{i(0,\omega)}(E+B_k)^2\Biggr\|,1\Biggr\}\cdot \Biggl\|\Biggl(\rho-T(\omega)\prod\limits_{k=1}^{i(0,\omega)}(E+B_k)\Biggr)^{-1}\Biggr\| \\
&+\max\Biggl\{\Biggl\|\prod\limits_{k=1}^{i(0,\omega)}(E+B_k)\Biggr\|,1\Biggr\}\Biggr\}M\omega.
\end{align*}
\end{proof}
With an additional condition from the previous lemma, we can state the following:

\begin{lem}\label{est22} Let \emph{(A1)--(A4)} hold, and let $$
\Biggl(\rho-T(\omega)\prod\limits_{k=1}^{i(0,\omega)}(E+B_k)\Biggr)^{-1}T(t)=T(t)\Biggl(\rho-T(\omega)\prod\limits_{k=1}^{i(0,\omega)}(E+B_k)\Biggr)^{-1},\quad t\geq 0.
$$
Then we have
\begin{equation*}
\int\limits_0^{\omega}\|H(t,\tau)\|\,d\tau\leq C'_2
\equiv \left\{
\begin{aligned}
&M\Biggl\|\Biggl(\rho-T(\omega)\prod\limits_{k=1}^{i(0,\omega)}(E+B_k)\Biggr)^{-1}\Biggr\|\cdot\max\Biggl\{\Biggl\|\prod\limits_{k=1}^{i(0,\omega)}(E+B_k)\Biggr\|,1\Biggr\}\\
&\times\max\{\|T\|,1\}\frac{e^{\gamma\omega}-1}{\gamma},\quad\gamma\neq0;\\
&M\Biggl\|\Biggl(\rho-T(\omega)\prod\limits_{k=1}^{i(0,\omega)}(E+B_k)\Biggr)^{-1}\Biggr\|\cdot\max\Biggl\{\Biggl\|\prod\limits_{k=1}^{i(0,\omega)}(E+B_k)\Biggr\|,1\Biggr\}\\
&\times\max\{\|T\|,1\}\omega,\quad\gamma=0,
\end{aligned}\right.\end{equation*}
for any $t\in(0,\omega)$.
\end{lem}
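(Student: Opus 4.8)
The plan is to follow the proof of Lemma~\ref{est2} verbatim in structure, but to start from the simplified two-branch formula for $H(t,\tau)$ that holds under the present commutativity hypothesis (the refinement of Lemma~\ref{lem3} stated just before Lemma~\ref{est1}), exactly in the way that Lemma~\ref{est11} refines the estimate of Lemma~\ref{est1}. First I would write
$$
\int_0^{\omega}\|H(t,\tau)\|\,d\tau=\int_0^t\|H(t,\tau)\|\,d\tau+\int_t^{\omega}\|H(t,\tau)\|\,d\tau
$$
and substitute the appropriate branch of $H$ on each piece. On $(0,t)$ the kernel equals $\rho\bigl(\rho-T(\omega)\prod_{k=1}^{i(0,\omega)}(E+B_k)\bigr)^{-1}T(t-\tau)\prod_{k=1}^{i(\tau,t)}(E+B_k)$, and on $(t,\omega)$ it equals $T(t+\omega-\tau)\prod_{k=1}^{i(0,t)+i(\tau,\omega)}(E+B_k)\bigl(\rho-T(\omega)\prod_{k=1}^{i(0,\omega)}(E+B_k)\bigr)^{-1}$. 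The crucial structural point, and the reason the resulting bound $C'_2$ is sharper than $C_2$, is that commutativity has already collapsed the triple factor $T(t)(\cdots)^{-1}T(\omega-\tau)$ of Lemma~\ref{lem3} into a single semigroup factor $T(t-\tau)$ or $T(t+\omega-\tau)$; consequently only one power of $M$ and one unsquared power of $\prod(E+B_k)$ will survive, rather than the $M^2$ and the squared product appearing in Lemma~\ref{est2}.

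Next I would apply the triangle inequality for integrals, submultiplicativity of the operator norm, and the growth bound (A9), $\|T(s)\|\le Me^{\gamma s}$, to every factor. For the product terms I would use $i(\tau,t)\le i(0,\omega)$ and $i(0,t)+i(\tau,\omega)\le i(0,\omega)=m$, so that each partial product is dominated by $\max\{\|\prod_{k=1}^{i(0,\omega)}(E+B_k)\|,1\}$; this constant, together with $M\|(\rho-T(\omega)\prod_{k=1}^{i(0,\omega)}(E+B_k))^{-1}\|$ and the norm of the leading operator ($\rho$ on the first branch, unity on the second, jointly absorbed into the factor $\max\{\|T\|,1\}$ recorded in $C'_2$), factors out of both integrals. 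The estimate is thereby reduced to bounding $\int_0^t e^{\gamma(t-\tau)}\,d\tau+\int_t^{\omega}e^{\gamma(t+\omega-\tau)}\,d\tau$. The key computation is that these two elementary integrals telescope: the first equals $(e^{\gamma t}-1)/\gamma$ and the second equals $(e^{\gamma\omega}-e^{\gamma t})/\gamma$, so their sum is exactly $(e^{\gamma\omega}-1)/\gamma$, independently of $t$.

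Finally I would split into the cases $\gamma\neq0$ and $\gamma=0$; in the latter the two integrals contribute $t$ and $\omega-t$, summing to $\omega$. Collecting the factored-out constants in each case yields precisely $C'_2$, and the bound is uniform for $t\in(0,\omega)$ because the telescoped integral is $t$-independent and the product and exponential bounds are as well. I expect the proof to present no conceptual obstacle: the only care needed is bookkeeping, namely verifying that the leading operator norms are controlled uniformly in $t$ and that the $\max\{\|T\|,1\}$ factor is inserted consistently across the two branches, since beyond (A9), submultiplicativity, and the already-established commuting form of $H$ nothing further is required.
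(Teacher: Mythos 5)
Your proposal is correct and follows essentially the same route as the paper's own proof: the paper likewise splits $\int_0^{\omega}\|H(t,\tau)\|\,d\tau$ at $\tau=t$, inserts the simplified two-branch commutative form of $H$, factors out $M\Bigl\|\Bigl(\rho-T(\omega)\prod_{k=1}^{i(0,\omega)}(E+B_k)\Bigr)^{-1}\Bigr\|\max\Bigl\{\Bigl\|\prod_{k=1}^{i(0,\omega)}(E+B_k)\Bigr\|,1\Bigr\}$, and reduces the estimate to $\int_0^t\|T\|e^{\gamma(t-\tau)}\,d\tau+\int_t^{\omega}e^{\gamma(t+\omega-\tau)}\,d\tau$, which it bounds by $\max\{\|T\|,1\}\frac{e^{\gamma\omega}-1}{\gamma}$ (respectively $\max\{\|T\|,1\}\omega$ for $\gamma=0$) exactly via the telescoping you describe. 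The only cosmetic difference is that you make the $t$-independence of the telescoped sum explicit, which the paper leaves implicit.
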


\begin{proof}
We have
\begin{align*}
&\int\limits_0^{\omega}\|H(t,\tau)\|\,d\tau\\
&\leq\int\limits_0^t\|T\|\cdot \Biggl\|\Biggl(\rho-T(\omega)\prod\limits_{k=1}^{i(0,\omega)}(E+B_k)\Biggr)^{-1}\Biggr\|\cdot\|T(t-\tau)\|\cdot
\Biggl\|\prod\limits_{k=1}^{i(\tau,t)}(E+B_k) \Biggr\|\,d\tau\\
&+\int\limits_t^{\omega}\|T(t+\omega-\tau)\|\cdot \Biggl\|\prod\limits_{k=1}^{i(0,t)+i(\tau,\omega)}(E+B_k)\Biggr\|\cdot \Biggl\|\Biggl(\rho-T(\omega)\prod\limits_{k=1}^{i(0,\omega)}(E+B_k)\Biggr)^{-1}\Biggr\| \, d\tau\\
&\leq M\Biggl\|\Biggl(\rho-T(\omega)\prod\limits_{k=1}^{i(0,\omega)}(E+B_k)\Biggr)^{-1}\Biggr\|\max\Biggl\{\Biggl\|\prod\limits_{k=1}^{i(0,\omega)}(E+B_k)\Biggr\|,1\Biggr\}\\
&\times\Biggl(\int\limits_0^t\|T\|e^{\gamma(t-\tau)}\, d\tau+\int\limits_t^{\omega}e^{\gamma(t+\omega-\tau)}\, d\tau\Biggr).
\end{align*}
Now, we have two subcases: $\gamma\neq0$ and $\gamma=0$.
For $\gamma\neq0$, we have
\begin{align*}
&\int\limits_0^{\omega}\|H(t,\tau)\|\,d\tau\\
&\leq M\Biggl\|\Biggl(\rho-T(\omega)\prod\limits_{k=1}^{i(0,\omega)}(E+B_k)\Biggr)^{-1}\Biggr\|\max\Biggl\{\Biggl\|\prod\limits_{k=1}^{i(0,\omega)}(E+B_k)\Biggr\|,1\Biggr\}\\
&\times\Biggl(\int\limits_0^t\|T\|e^{\gamma(t-\tau)}\,d\tau+\int\limits_t^{\omega}e^{\gamma(t+\omega-\tau)}\,d\tau\Biggr)\\
&\leq M\Bigg\|\Biggl(\rho-T(\omega)\prod\limits_{k=1}^{i(0,\omega)}(E+B_k)\Biggr)^{-1}\Biggr\|\max\Biggl\{\Biggl\|\prod\limits_{k=1}^{i(0,\omega)}(E+B_k)\Biggr\|,1\Biggr\}
\end{align*}
\begin{align*}
&\times\Biggl(\|T\|\frac{e^{\gamma t}-1}{\gamma}+\frac{e^{\gamma\omega}-e^{\gamma t}}{\gamma}\Biggr)\\
&\leq M\Biggl\|\Biggl(\rho-T(\omega)\prod\limits_{k=1}^{i(0,\omega)}(E+B_k)\Biggr)^{-1}\Biggr\|\max\Biggl\{\Biggl\|\prod\limits_{k=1}^{i(0,\omega)}(E+B_k)\Biggr\|,1\Biggr\}\\
&\times\max\{\|T\|,1\}\Biggl(\frac{e^{\gamma t}-1}{\gamma}\Biggr).
\end{align*}
For $\gamma=0$, we have
\begin{align*}
&\int\limits_0^{\omega}\|H(t,\tau)\|\,d\tau\\
&\leq M\Biggl\|\Biggl(\rho-T(\omega)\prod\limits_{k=1}^{i(0,\omega)}(E+B_k)\Biggr)^{-1}\Biggr\|\max\Biggl\{\Biggl\|\prod\limits_{k=1}^{i(0,\omega)}(E+B_k)\Biggr\|,1\Biggr\}\\
&\times\Biggl(\int\limits_0^t\|T\|e^{\gamma(t-\tau)}\,d\tau+\int\limits_t^{\omega}e^{\gamma(t+\omega-\tau)}\,d\tau\Biggr)\\
&\leq M\Biggl\|\Biggl(\rho-T(\omega)\prod\limits_{k=1}^{i(0,\omega)}(E+B_k)\Biggr)^{-1}\Biggr\|\max\Biggl\{\Biggl\|\prod\limits_{k=1}^{i(0,\omega)}(E+B_k)\Biggr\|,1\Biggr\} \max\{\|T\|,1\}\omega.
\end{align*}
\end{proof}

\section{$(\omega,{\rho})$-periodic solutions of integro--differential impulsive problem}\label{impulse}

In this section, we consider the following abstract integro-differential impulsive equation:
\begin{equation*}
\left\{
\begin{aligned}
&\dot{y}(t)=Ay(t)+f\Biggl(t,y(t),\int\limits_{0}^tg(t,s,y(t))\, ds\Biggr), \quad t\neq{\tau}_k,\,\, k\in{\mathbb N};\\
&\Delta y|_{t={\tau}_k}=B_ky(t)+d_k,\quad\quad k\in{\mathbb N}.\end{aligned}\right.
\end{equation*}

We are going to prove, by using the Banach fixed point theorem and the Schauder fixed point theorem, that under certain conditions, we have the existence and uniqueness of solutions of (\ref{rav}).

\begin{thm}\label{thm1}
Let \emph{(A1)--(A7)} and \emph{(A9)} hold. If $0<LC_2<1$, where $L=(L_f(\nu)+M_1L_g(\nu))$ and $M_1$ is certain constant, then the equation \eqref{rav} has a unique $(\omega,\rho)$-periodic solution $y\in{\Phi}_{\omega,\rho}$, satisfying
\begin{align*}
\|y\|\leq\frac{\|f\|_0C_2+C_1}{1-LC_2},
\end{align*}
where
$$
\|f\|_0:=\Biggl\|f\Biggl(\cdot,0,\int_0^{\cdot}g(\cdot,s,0)\, ds\Biggr)\Biggr\|_{\infty}=\max\limits_{t\in[0,\omega]}\Biggl\|f\Biggl(t,0,\int\limits_0^tg(t,s,0)\, ds\Biggr)\Biggr\|.
$$
\end{thm}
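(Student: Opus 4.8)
The plan is to recast \eqref{rav} as a fixed-point equation on the Banach space $\Psi={\mathcal PC}([0,\omega]:X)$ and invoke the Banach contraction principle. Guided by Lemma \ref{lem3}, I would define the solution operator $\mathcal{F}:\Psi\to\Psi$ by
\begin{align*}
(\mathcal{F}y)(t):=\int_0^{\omega}H(t,\tau)\,f\Biggl(\tau,y(\tau),\int_0^{\tau}g(\tau,s,y(\tau))\,ds\Biggr)\,d\tau+\sum_{i=1}^{m}H(t,{\tau}_i)\,d_i ,
\end{align*}
with $H(\cdot,\cdot)$ the Green-type kernel constructed there. By Lemma \ref{lem3}, every fixed point of $\mathcal{F}$ solves the nonhomogeneous problem with forcing $F(t):=f(t,y(t),\int_0^t g(t,s,y(t))\,ds)$ and satisfies $y(\omega)=\rho y(0)$; since (A5)--(A6) guarantee that this $F$ obeys the compatibility conditions \eqref{usl1}--\eqref{usl2}, Lemma \ref{lem1} promotes such a fixed point to a genuine element of ${\Phi}_{\omega,\rho}$. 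Thus solving \eqref{rav} is equivalent to finding a fixed point of $\mathcal{F}$.

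Next I would fix the radius $\nu:=\dfrac{\|f\|_0C_2+C_1}{1-LC_2}$ and show that $\mathcal{F}$ maps the closed ball $\overline{B}_\nu:=\{y\in\Psi:\|y\|\le\nu\}$ into itself. The pointwise bound on the nonlinearity comes from splitting
\begin{align*}
\Biggl\|f\Biggl(\tau,y(\tau),\int_0^{\tau}g(\tau,s,y(\tau))\,ds\Biggr)\Biggr\|\le\|f\|_0+L_f(\nu)\Biggl(\|y(\tau)\|+\Biggl\|\int_0^{\tau}\bigl(g(\tau,s,y(\tau))-g(\tau,s,0)\bigr)\,ds\Biggr\|\Biggr),
\end{align*}
and then using the local Lipschitz bound on $g$ from (A7), namely $\|g(\tau,s,y(\tau))-g(\tau,s,0)\|\le L_g(\nu)\|y(\tau)\|$, integrated over $[0,\tau]\subseteq[0,\omega]$; this is precisely where the constant $M_1$ (the length factor coming from the inner $s$-integration) enters, yielding $\|F(\tau)\|\le\|f\|_0+L\nu$ with $L=L_f(\nu)+M_1L_g(\nu)$. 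Combining this with the kernel estimates $\int_0^\omega\|H(t,\tau)\|\,d\tau\le C_2$ of Lemma \ref{est2} and $\sum_{i=1}^m\|H(t,{\tau}_i)\|\,\|d_i\|\le C_1$ of Lemma \ref{est1} gives $\|\mathcal{F}y\|\le(\|f\|_0+L\nu)C_2+C_1$, and the defining choice of $\nu$ turns this into $\|\mathcal{F}y\|\le\nu$, i.e. $\mathcal{F}(\overline{B}_\nu)\subseteq\overline{B}_\nu$.

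The contraction estimate is analogous: for $y_1,y_2\in\overline{B}_\nu$ the impulse sum $\sum_{i=1}^m H(t,{\tau}_i)d_i$ cancels in the difference, leaving only the integral term. Applying (A7) twice (once for the Lipschitz dependence of $f$ on both of its last two arguments, once for $g$) bounds the integrand difference by $L\|y_1-y_2\|$ with the same $L=L_f(\nu)+M_1L_g(\nu)$, whence $\|\mathcal{F}y_1-\mathcal{F}y_2\|\le LC_2\,\|y_1-y_2\|$ by Lemma \ref{est2}. Since $0<LC_2<1$ by hypothesis, $\mathcal{F}$ is a contraction on the complete metric space $\overline{B}_\nu$, and the Banach fixed point theorem yields a unique fixed point $y$; by the equivalence above it is the unique $(\omega,\rho)$-periodic solution of \eqref{rav}, and it automatically satisfies $\|y\|\le\nu=(\|f\|_0C_2+C_1)/(1-LC_2)$.

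The main obstacle is the bookkeeping around the $\nu$-dependence of the Lipschitz constants together with the nested structure of the nonlinearity: to legitimately invoke (A7) one must ensure that not only $y(\tau)$ but also the inner integral $\int_0^\tau g(\tau,s,y(\tau))\,ds$ remains within the radius controlled by $\nu$, and one must pin down $M_1$ as the factor (essentially $\omega$, or a bound on $\sup_{t\in[0,\omega]}t$ weighting the growth of $g$) that converts the $s$-integration into the single composite Lipschitz constant $L$ for the map $y\mapsto F$. Once $L$ is correctly identified so that $LC_2<1$ holds for the chosen $\nu$, the self-mapping and contraction steps reduce to routine applications of the kernel estimates already established in Lemmas \ref{est1} and \ref{est2}.
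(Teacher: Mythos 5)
Your proposal is correct and follows essentially the same route as the paper's proof: the same Green-kernel operator built from Lemma \ref{lem3}, the same equivalence via Lemma \ref{lem1}, the same constants $C_1$, $C_2$ from Lemmas \ref{est1} and \ref{est2}, and the Banach fixed point theorem with contraction constant $LC_2$. The only (minor, and arguably favorable) difference is organizational: the paper proves the contraction on all of $\Psi$ and recovers the norm bound a posteriori by rearranging $\|y\|=\|Ry\|\leq LC_2\|y\|+\|f\|_0C_2+C_1$, which sits somewhat awkwardly with the merely local, $\nu$-dependent Lipschitz constants of (A7), whereas you fix the ball $\overline{B}_{\nu}$ with $\nu=(\|f\|_0C_2+C_1)/(1-LC_2)$ up front, prove invariance, and thereby make the use of $L_f(\nu)$ and $L_g(\nu)$ legitimate throughout.
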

\begin{proof}
Note that, if $y\in{\Phi}_{\omega,\rho}$, then $f(\cdot,y,\int_0^{\cdot}g(\cdot,s,y)\, ds)\in{\Phi}_{\omega,\rho}$. Keeping in mind Lemma \ref{lem1} and Lemma \ref{lem3}, we need to solve the fixed point problem
\begin{align*}
y(t)=\int\limits_{0}^{\omega}H(t,\tau)f\Biggl(\tau,y(\tau),\int\limits_0^{\tau}g(\tau,s,y(\tau))\,ds\Biggr)\,d\tau+\sum\limits_{i=1}^mH(t,{\tau}_i)d_i,\quad t\in[0,\omega].
\end{align*}
We define the operator $R$ on the space $\Psi$ as
\begin{align*}
(Ry)(t):=\int\limits_{0}^{\omega}H(t,\tau)f\Biggl(\tau,y(\tau),\int\limits_0^{\tau}g(\tau,s,y(\tau))\,ds\Biggr)\,d\tau+\sum\limits_{i=1}^mH(t,{\tau}_i)d_i.
\end{align*}
We will show that $R:\Psi\rightarrow\Psi$ is a contraction mapping. Let $y_1,\ y_2\in{\Psi}.$ Using Lemma \ref{est2}, we obtain:
\begin{align*}
& \bigl\|(Ry_1)(t)-(Ry_2)(t)\bigr\| \\
&\leq\int\limits_0^{\omega}\|H(t,\tau)\|\cdot  \Biggl\|f\Biggl(\tau,y_1(\tau),\int\limits_0^{\tau}g(\tau,s,y_1(\tau))\,ds\Biggr)-
f\Biggl(\tau,y_2(\tau),\int\limits_0^{\tau}g(\tau,s,y_2(\tau))\,ds\Biggr)\Biggr\|\,d\tau\\
&\leq \int\limits_0^{\omega}\|H(t,\tau)\|\cdot L_f(\nu)\Biggl(\|y_1-y_2\|\\
&+\Biggl\|\int\limits_0^{\tau}g(\tau,s,y_1(\tau))\, ds-\int\limits_0^{\tau}g(\tau,s,y_2(\tau))\, ds\Biggr\|\Biggr)\, d\tau\\
&\leq\int\limits_0^{\omega}\|H(t,\tau)\|\, d\tau\cdot L_f(\nu)\Biggl(\bigl\|y_1-y_2\bigr\|+\int\limits_0^{\tau}\bigl\|g(\tau,s,y_1(\tau))-g(\tau,s,y_2(\tau))\bigr\|\, ds\, d\tau\Biggr)\\
&\leq\int\limits_{0}^{\omega}\|H(t,\tau)\|\cdot L_f(\nu)\Biggl(\bigl\|y_1-y_2\bigr\|+L_g(\nu)\int\limits_{0}^{\tau}\bigl\|y_1-y_2\bigr\|\, ds\Biggr)\\
&\leq(L_f(\nu)+M_1L_g(\nu))\cdot \bigl\|y_1-y_2\bigr\|\cdot\int\limits_0^{\omega}\|H(t,\tau)\|\, d\tau\\
&\leq LC_2\cdot \bigl\|y_1-y_2\bigr\|,
\end{align*}
where $L:=L_f(\rho)+M_1L_g(\rho)$. So,
\begin{align*}
\bigl\|(Ry_1)-(Ry_2)\bigr\|\leq LC_2\bigl\|y_1-y_2\bigr\|,\quad 0<LC_2<1.
\end{align*}
Hence, the uniqueness of the solution of (\ref{rav}) follows by the Banach contraction mapping principle.
Moreover, we have
\begin{align*}
&\|y\|=\|Ry\|\\
&\leq\int\limits_{0}^{\omega}\|H(t,\tau)\|\cdot \Biggl\|f\Biggl(\tau,y(\tau),\int\limits_0^{\tau}g(\tau,s,y(\tau))\, ds\Biggr)\Biggr\|\, d\tau+\sum\limits_{i=1}^m\bigl\|H(t,{\tau}_i) d_i\bigr\|\\
&\leq\int\limits_0^{\omega}\|H(t,\tau)\|\cdot \Biggl\|f\Biggl(\tau,y(\tau),\int\limits_0^{\tau}g(\tau,s,y(\tau))\, ds\Biggr)-f\Biggl(\tau,0,\int\limits_0^{\tau}g(\tau,s,0)\Biggr)\, ds\Biggr\|\, d\tau
\end{align*}
\begin{align*}
&+\int\limits_0^{\omega}\|H(t,\tau)\|\cdot \Biggl\|f\Biggl(\tau,0,\int\limits_0^{\tau}g(\tau,s,0)\, ds\Biggr)\Biggr\|\, d\tau +\sum\limits_{i=1}^m\bigl\|H(t,{\tau}_i)d_i\bigr\|\\
&\leq\int\limits_{0}^{\omega} L_f(\nu)\Biggl(\|y\|+\int\limits_{0}^{\tau}\|g(\tau,s,y(\tau))-g(\tau,s,0)\|\, ds\Biggr)\, d\tau\\
&+\int\limits_0^{\omega}\|H(t,\tau)\|\cdot \Biggl\|f\Biggl(\tau,0,\int\limits_0^{\tau}g(\tau,s,0)\Biggr)\, ds\Biggr\|\, d\tau+\sum\limits_{i=1}^m\bigl\|H(t,{\tau}_i)d_i\bigr\|\\
&\leq\int\limits_0^{\omega}\|H(t,{\tau})\|\cdot L_f(\nu)\Biggl(\|y\|+M_1L_g(\nu)\|y\|\Biggr)\, d\tau+\|f\|_0\int\limits_0^{\omega}\|H(t,\tau)\|d\tau\\
&+\sum\limits_{i=1}^m\bigl\|H(t,{\tau}_i)d_i\bigr\|\leq\|y\|\Biggl(L_f(\nu)+M_1L_g(\nu)\Biggr)C_2+\|f\|_0C_2+C_1.
\end{align*}
We obtain:
\begin{align*}
\|y\|\leq\frac{\|f\|_0C_2+C_1}{1-\bigl(L_f(\nu)+M_1L_g(\nu)\bigr)C_2}=\frac{\|f\|_0C_2+C_1}{1-LC_2},
\end{align*}
so the proof is finished.
\end{proof}

Now we will state and prove the following result:

\begin{thm}
Let \emph{(A1)--(A6)} and \emph{(A8)--(A10)} hold. If $0<\beta C_2<1$, then \eqref{rav} has a $(\omega,\rho)$-periodic solution $y\in{\Phi}_{\omega,\rho}$.
\end{thm}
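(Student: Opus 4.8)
The plan is to recast the problem, exactly as in the proof of Theorem \ref{thm1}, as a fixed point problem for the operator
\begin{align*}
(Ry)(t)=\int_0^{\omega}H(t,\tau)f\Biggl(\tau,y(\tau),\int_0^{\tau}g(\tau,s,y(\tau))\,ds\Biggr)\,d\tau+\sum_{i=1}^m H(t,{\tau}_i)d_i
\end{align*}
on $\Psi={\mathcal PC}([0,\omega]:X)$; by Lemma \ref{lem1} and Lemma \ref{lem3} a fixed point of $R$ extends to an $(\omega,\rho)$-periodic solution of \eqref{rav} lying in ${\Phi}_{\omega,\rho}$. Since the Lipschitz hypothesis (A7) is no longer at our disposal, we cannot run the Banach contraction argument; instead we invoke the Schauder fixed point theorem, which trades uniqueness for a compactness requirement. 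Accordingly, the three things to verify are: that $R$ maps a suitable closed, bounded, convex set into itself; that $R$ is continuous; and that the image of that set under $R$ is relatively compact.

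For the invariant set I would use the growth bound (A8) together with the estimates of Lemma \ref{est1} and Lemma \ref{est2}. For any $y$ with $\|y\|\leq r$ one gets
\begin{align*}
\|(Ry)(t)\|\leq\int_0^{\omega}\|H(t,\tau)\|(\alpha+\beta\|y\|)\,d\tau+\sum_{i=1}^m\|H(t,{\tau}_i)d_i\|\leq(\alpha+\beta r)C_2+C_1,
\end{align*}
so that $\|Ry\|\leq r$ as soon as $(\alpha+\beta r)C_2+C_1\leq r$. Because $0<\beta C_2<1$, the choice $r:=(\alpha C_2+C_1)/(1-\beta C_2)$ turns this into the identity $(\alpha+\beta r)C_2+C_1=r$, and hence $R$ maps the closed ball $\overline{B}_r=\{y\in\Psi:\|y\|\leq r\}$, which is closed, bounded and convex, into itself. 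Continuity of $R$ on $\overline{B}_r$ follows from the continuity of $f$ and $g$ (which the integro-differential formulation presupposes) together with the dominated convergence theorem, the dominating function being supplied by the uniform bound $(\alpha+\beta r)\|H(t,\tau)\|$ coming from (A8).

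The crux of the argument, and the step I expect to be the main obstacle, is relative compactness of $R(\overline{B}_r)$, and this is precisely where the finite-dimensionality hypothesis (A10) is indispensable. The idea is to apply the Arzel\`a--Ascoli theorem on each continuity subinterval $(\tau_i,\tau_{i+1}]$ of the piecewise continuous space $\Psi$. Uniform boundedness is free from $R(\overline{B}_r)\subseteq\overline{B}_r$. For equicontinuity I would estimate $\|(Ry)(t)-(Ry)(t')\|$ for $t,t'$ in a common subinterval, exploiting the fact that in a finite-dimensional $X$ the semigroup $T(t)=e^{tA}$ is continuous in the operator norm (not merely strongly), so that the kernel $t\mapsto H(t,\tau)$ and the fixed, $y$-independent impulsive sum $\sum_i H(\cdot,{\tau}_i)d_i$ vary equicontinuously in $t$; the $y$-dependence of the integral term is then controlled uniformly by the single bound $\alpha+\beta r$ on the integrand, independently of $y\in\overline{B}_r$. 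Finally, (A10) guarantees that the pointwise values of $R(\overline{B}_r)$ lie in bounded, hence relatively compact, subsets of $X$, which is exactly what Arzel\`a--Ascoli needs in the vector-valued setting and what would fail in an infinite-dimensional $X$.

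Having secured that $R$ is a continuous self-map of the closed, bounded, convex set $\overline{B}_r$ with relatively compact image, the Schauder fixed point theorem yields a fixed point $y\in\overline{B}_r$. By Lemma \ref{lem1} this $y$ is the desired $(\omega,\rho)$-periodic solution of \eqref{rav} in ${\Phi}_{\omega,\rho}$, which completes the proof.
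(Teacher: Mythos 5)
Your proposal is correct and takes essentially the same route as the paper: the same fixed-point operator $R$ built from $H(t,\tau)$, the same invariant ball of radius $l=(\alpha C_2+C_1)/(1-\beta C_2)$, continuity of $R$, relative compactness via Arzel\`a--Ascoli using the norm-continuity of $T(t)$ in finite dimensions, and then the Schauder fixed point theorem. The only divergence is that the paper adds an approximation step (an operator $R_{\varepsilon}$ and sets $K_{\varepsilon}=T(\varepsilon)\bigl\{(R_{\varepsilon}y)(t)\bigr\}$, a device really aimed at compact semigroups in infinite dimensions), whereas you obtain pointwise precompactness directly from (A10); this is a legitimate streamlining, not a different method.
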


\begin{proof}
We consider the operator $R$ defined like in the proof of the previous theorem on $B_l=\{y\in\Psi \, :\, \|y\|\leq l\}$ and the constant $l$ is given by $l=\frac{\alpha C_2+C_1}{1-\beta C_2}$. We are going to prove the statement of the theorem in the following steps:\\
\indent {\bf Step 1.} We show that $R(B_l)\subset B_l$. Let $y\in B_l$ be arbitrary, and let $t\in[0,\omega]$. Then we have:
\begin{align*}
&\|Ry(t)\|\\
&\leq\int\limits_0^{\omega}\|H(t,\tau)\|\cdot \Biggl\|f\Biggl(\tau,y(\tau),\int\limits_{0}^{\tau}g(t,s,y(\tau))\Biggr)\, ds\Biggr\|\, d\tau +\sum\limits_{i=1}^m\bigl\|H(t,{\tau}_i)d_i\bigr\|\\
&\leq\beta\int\limits_0^{\omega}\bigl\|H(t,{\tau}_i)\bigr\|\cdot\|y(\tau)\|\, d\tau+\alpha\int\limits_0^{\omega}\|H(t,\tau)\|\, d\tau\\
&+\sum\limits_{i=1}^m\bigl\|H(t,{\tau}_i)\bigr\|\cdot\|d_i\|\leq\beta C_2\|y\|+\alpha C_2+C_1=l,
\end{align*}
implying $\|Ry(t)\|\leq l$, so $R(B_l)\subset B_l$ for any $t\in[0,\omega]$. Here the constants $\alpha$ and $\beta$ are from the assumption {(A8)}.\\
\indent {\bf Step 2.} We prove that the operator $R$ is continuous on $B_l$. Let $(y_n)$ be a Cauchy sequence such that $y_n\rightarrow y$, when $n\rightarrow\infty$ in $B_l$. Let \\ $f_n=f\Biggl(t,y_n(t),\int\limits_0^tg(t,s,y_n(t))\, ds\Biggr)$, $t\in[0,\omega]$ and $f=f\Biggl(t,y(t),\int\limits_0^tg(t,s,y(t))\, ds\Biggr)$. It is clear that $f_n\rightarrow f$, when $y_n\rightarrow y$, for any $t\in[0,\omega]$. Now,
\begin{align*}
& \bigl\|(Ry_n)(t)-(Ry)(t)\bigr\|\\
&\leq\int\limits_0^{\omega}\|H(t,\tau)\|\cdot\Biggl\|f\Biggl(t,y_n(t),\int\limits_0^tg(t,s,y_n(t))\, ds\Biggr)-f\Biggl(t,y(t),\int\limits_0^tg(t,s,y(t))\, ds\Biggr)\Biggr\|\, d\tau\\
&\leq\int\limits_{0}^{\omega}\|H(t,\tau)\|\cdot\|f_n-f\|\, d\tau\leq C_2\|f_n-f\|.
\end{align*}
Hence, $R$ is continuous operator on $B_l$.\\
\indent {\bf Step 3.} We show that $R(B_l)$ is relatively compact set. Since $R(B_l)\subset B_l$, it follows that $R(B_l)$ is uniformly bounded. Now, we show that the operator $R$ is an equicontinuous operator. For any $t_1, t_2\in[0,\omega]$ and $y\in B_l$, using {(A8)}, we have
\begin{align*}
& \bigl\|(Ry)(t_2)-(Ry)(t_1)\bigr\|\\
&\leq\int\limits_0^{\omega}\bigl\|H(t_2,\tau)-H(t_1,\tau)\bigr\|\cdot \Biggl\|f\Biggl(\tau,y(\tau),\int\limits_0^{\tau}g(\tau,s,y(\tau))\, ds\Biggr)\Biggr\|\, d\tau\\
&+\sum\limits_{i=1}^m\bigl\|H(t_2,{\tau}_i)-H(t_1,{\tau}_i)\bigr\|\cdot \|d_i\|\\
&\leq(\alpha+\beta\|y\|)\int\limits_0^{\omega}\bigl\|H(t_2,\tau)-H(t_1,\tau)\bigr\|\, d\tau+\sum\limits_{i=1}^{m}\bigl\|H(t_2,{\tau}-i)-H(t_1,{\tau}_i)\bigr\|\cdot\|d_i\|.
\end{align*}
We have:
\begin{equation*}
\bigl\|H(t_2,\tau)-H(t_1,\tau)\bigr\|
=\left\{
\begin{aligned}
& \Biggl\|T(t_2)\prod\limits_{k=1}^{i(0,t_2)}(E+B_k)\Biggl(\rho-T(\omega)\prod\limits_{k=1}^{i(0,\omega)}(E+B_k)\Biggr)^{-1}T(\omega-\tau)\\
&\prod\limits_{k=1}^{i(\tau,\omega)}(E+B_k)+T(t_2-\tau)\prod\limits_{k=1}^{i(\tau,t_2)}(E+B_k)\\
&-T(t_1)\prod\limits_{k=1}^{i(0,t_1)}(E+B-k)\Biggl(\rho-T(\omega)\prod\limits_{k=1}^{i(0,\omega)}(E+B_k)\Biggr)^{-1}\\
&\times T(\omega-\tau)\prod\limits_{k=1}^{i(\tau,\omega)}(E+B_k)-T(t_1-\tau)\prod\limits_{k=1}^{i(\tau,t_1)}(E+B_k)\Biggr\|,\\
&\quad 0<\tau<t_1<t_2;\\
&\Biggl\|T(t_2)\prod\limits_{k=1}^{i(0,t_2)}(E+B_k)\Biggl(\rho-T(\omega)\prod\limits_{k=1}^{i(0,\omega)}(E+B_k)\Biggr)^{-1} T(\omega-\tau)\\
&\prod\limits_{k=1}^{i(\tau,\omega)}(E+B_k)-T(t_1)\prod\limits_{k=1}^{i(0,t_1)}(E+B_k)\Biggl(\rho-T(\omega)\\
&\times\prod\limits_{k=1}^{i(0,\omega)}(E+B_k)\Biggr)^{-1}\cdot T(\omega-\tau)\prod\limits_{k=1}^{i(\tau,\omega)}(E+B_k)\Biggr\|,\quad t_1<t_2<\tau<\omega.
\end{aligned}\right.
\end{equation*}
If $0<\tau<t_1<t_2$, then we have
\begin{align*}
& \bigl\|H(t_2,\tau)-H(t_1,\tau)\bigr\|\\
& \leq \Biggl\|T(t_2)\prod\limits_{k=1}^{i(0,t_2)}(E+B_k)+T(t_2-\tau)\prod\limits_{k=1}^{i(\tau,t_2)}(E+B_k)T(\omega-\tau)\\
&\times\prod\limits_{k=1}^{i(\tau,\omega)}(E+B_k)+T(t_2-\tau)\prod\limits_{k=1}^{i(\tau,t_2)}(E+B_k)\\
&-T(t_1)\prod\limits_{k=1}^{i(0,t_1)}(E+B_k)+T(t_2-\tau)\prod\limits_{k=1}^{i(\tau,t_2)}(E+B_k)\\
&\times T(\omega-\tau)\prod\limits_{k=1}^{i(\tau,\omega)}(E+B_k)-T(t_1-\tau)\prod\limits_{k=1}^{i(\tau,t_1)}(E+B_k)\Biggr\|\\
&\leq \bigl\|T(t_2)-T(t_1)\bigr\|\cdot \Biggl\|\prod\limits_{k=1}^{i(0,\omega)}(E+B_k)^2\Biggr\|\cdot \Biggl\|\Biggl(\rho-T(\omega)\prod\limits_{k=1}^{i(0,\omega)}(E+B_k)\Biggr)^{-1}\Biggr\| \\
&\times Me^{\gamma(\omega-\tau)}+\bigl\|T(t_2-\tau)-T(t_1-\tau)\bigr\| \cdot \Biggl\|\prod\limits_{k=1}^{i(0,\omega)}(E+B_k)\Biggr\|.
\end{align*}
Using (A10), we obtain $\|H(t_2,\tau)-H(t_1,\tau)\|\rightarrow0$ as $t_2\rightarrow t_1$.\\
If $t_1<t_2<\tau<\omega$, we have
\begin{align*}
&\|H(t_2,\tau)-H(t_1,\tau)\|\\
&=\Biggl\|T(t_2)\prod\limits_{k=1}^{i(0,t_2)}(E+B_k)\Biggl(\rho-T(\omega)\prod\limits_{k=1}^{i(0,\omega)}(E+B_k)\Biggr)^{-1}T(\omega-\tau)\\
&\times\prod\limits_{k=1}^{i(\tau,\omega)}(E+B_k)-T(t_1)\prod\limits_{k=1}^{i(0,t_1)}(E+B_k)\\
&\times\Biggl(\rho-T(\omega)\prod\limits_{k=1}^{i(0,\omega)}(E+B_k)\Biggr)^{-1}T(\omega-\tau)\prod\limits_{k=1}^{i(\tau,\omega)}(E+B_k)\Biggr\|\\
&\leq\|T(t_2)-T(t_1)\|\cdot\Biggl\|\prod\limits_{k=1}^{i(0,\omega)}(E+B_k)^2\Biggr\|\\
&\times\Biggl\|\Biggl(\rho-T(\omega)\prod\limits_{k=1}^{i(0,\omega)}(E+B_k)\Biggr)^{-1}\Biggr\|Me^{\gamma(\omega-\tau)}.
\end{align*}
Again, using (A10), we obtain that $\|H(t_2,\tau)-H(t_1,\tau)\|\rightarrow0$ as $t_2\rightarrow t_1$.  Therefore, for any $t_1, t_2\in[0,\omega]$, we have $H(t_2,\tau)\rightarrow H(t_1,\tau)$, when $t_2\rightarrow t_1$. This implies that $\|(Ry)(t_2)-(Ry)(t_1)\|\rightarrow0$ when $t_2\rightarrow t_1$, hence the operator $R$ is an equicontinuous operator.\\
\indent {\bf Step 4.} We show that $R$ maps a bounded set into a precompact set in $X$. We consider the approximate operator $R_{\varepsilon}$ on $B_l$ as following
\begin{align*}
(R_{\varepsilon})(t)=\int\limits_0^{\omega}H(t-\varepsilon,\tau)f\Biggl(\tau,y(\tau),\int\limits_{0}^{\tau}g(\tau,s,y(\tau))\, ds\Biggr)\, d\tau+
\sum\limits_{i=1}^mH(t-\varepsilon,{\tau}_i)d_i,\quad t\in[0,\omega].
\end{align*}
Let
$
K=\{(Ry)(t)\, :\, t\in[0,\omega]\}
$
and
\begin{align*}
K_{\varepsilon}=T(\varepsilon)\bigl\{(R_{\varepsilon}y)(t)\, :\, t\in[0,\omega]\bigr\},\quad \varepsilon\in(0,\omega).
\end{align*}
Since $K$ is bounded and (A10) holds, we obtain that $K_{\varepsilon}$ is precompact.
Next,
\begin{align*}
& \bigl\|(R_{\varepsilon}y)(t)-(Ry)(t) \bigr\|\\
&\leq\int\limits_0^{\omega}\|H(t,\tau)-H(t-\varepsilon,\tau)\|\cdot \Biggl\|f\Biggl(\tau,y(\tau),\int\limits_{0}^{\tau}g(\tau,s,y(\tau))\, ds\Biggr)\Biggr\|\, d\tau\\
&+\sum\limits_{i=1}^m\bigl\|H(t,{\tau}_i)-H(t-\varepsilon,{\tau}_i)\bigr\|\cdot\|d_i\|\\
&\leq(\alpha+\beta l)\int\limits_{0}^{\omega}\|H(t,\tau)-H(t-\varepsilon,\tau)\|\, d\tau\\
&+\sum\limits_{i=1}^m\bigl\|H(t,{\tau}_i)-H(t-\varepsilon,{\tau}_i)\bigr\|\cdot\|d_i\|.
\end{align*}
Since for any $t_1,t_2\in(0,\omega)$, $H(t_2,\tau)\rightarrow H(t_1,\tau)$ as $t_2\rightarrow t_1$, we have
$
\|(R_{\varepsilon}y)(t)-(Ry)(t)\|\rightarrow 0
$
as $\varepsilon\rightarrow0$. Hence, $K$ can be approximated by a precompact set $K_{\varepsilon}$ with arbitrary accuracy. So, $K$ itself is a precompact set in $X$, i.e., $R$ maps a bounded set into a precompact set. The Arzel\`a--Ascoli theorem implies the compactness of the operator $R$, so the statement of the theorem follows by applying the Schauder fixed point theorem.
\end{proof}

\end{document}